\numberwithin{equation}{section}
\newtheorem{theorem}{Theorem}[section]
\newtheorem{lemma}[theorem]{Lemma}
\newtheorem{proposition}[theorem]{Proposition}
\newtheorem{definition}[theorem]{Definition}
\theoremstyle{definition}
\begin{document}
\title[Real zeros of Hurwitz-Lerch type of Euler-Zagier double zeta functions]{Real zeros of Hurwitz-Lerch zeta and Hurwitz-Lerch type of Euler-Zagier double zeta functions}
\author[T.~Nakamura]{Takashi Nakamura}
\address[T.~Nakamura]{Department of Liberal Arts, Faculty of Science and Technology, Tokyo University of Science, 2641 Yamazaki, Noda-shi, Chiba-ken, 278-8510, Japan}
\email{nakamuratakashi@rs.tus.ac.jp}
\urladdr{https://sites.google.com/site/takashinakamurazeta/}
\subjclass[2010]{Primary 11M32, 11M35}
\keywords{Real zeros of Hurwitz-Lerch zeta function, real zeros of Hurwitz-Lerch type of Euler-Zagier double zeta functions}
\maketitle

\begin{abstract}
Let $0 < a \le 1$, $s,z \in {\mathbb{C}}$ and $0 < |z|\le 1$. Then the Hurwitz-Lerch zeta function is defined by $\Phi (s,a,z) := \sum_{n=0}^\infty z^n(n+a)^{-s}$ when $\sigma :=\Re (s) >1$.
In this paper, we show that the Hurwitz zeta function $\zeta (\sigma,a) := \Phi (\sigma,a,1)$ does not vanish for all $0 <\sigma <1$ if and only if $a \ge 1/2$. Moreover, we prove that $\Phi (\sigma,a,z) \ne 0$ for all $0 <\sigma <1$ and $0 < a \le 1$ when $z \ne 1$. Real zeros of Hurwitz-Lerch type of Euler-Zagier double zeta functions are studied as well. 
\end{abstract}

\section{Introduction and statement of main results}

\subsection{Reals zeros of Hurwitz-Lerch zeta functions}
As one of a generalization of the Riemann zeta function $\zeta (s):= \sum_{n=1}^\infty n^{-s}$, the following function is well-known.
\begin{definition}[see {\cite[p.~53, (1)]{Er}}]\label{def:ler}
For $0 < a \le 1$, $s,z \in {\mathbb{C}}$ and $0< |z|\le 1$, the Hurwitz-Lerch zeta function $\Phi(s,a,z)$ is defined by
\begin{equation}
\Phi (s,a,z) := \sum_{n=0}^{\infty} \frac{z^n}{(n+a)^s}, \qquad s := \sigma + it, \quad \sigma >1 , \quad t \in {\mathbb{R}}.
\end{equation}
\end{definition}
Note that the Riemann zeta function $\zeta (s)$ and the Hurwitz zeta function $\zeta (s,a)$ are written by $\Phi (s,1,1)$ and $\Phi (s,a,1)$, respectively. The Dirichlet series of $\Phi(s,a,z)$ converges absolutely in the half-plane $\sigma >1$ and uniformly in each compact subset of this half-plane. The function $\Phi(s,a,z)$ with $z \ne 1$ is analytically continuable to the whole complex plane but $\zeta (s,a)$ is a meromorphic function with a simple pole at $s=1$. In the present paper, we show the following theorem. 
\begin{theorem}\label{th:hlz1}
We have the following:\\
${\rm{(1)}}$. Let $z=1$. Then $\Phi (\sigma,a,1) \ne 0$ for all $0 <\sigma <1$ if and only if $a \ge 1/2$. \\
${\rm{(2)}}$. Let $z \ne 1$. Then $\Phi (\sigma,a,z) \ne 0$ for all $0 <\sigma <1$ and $0 < a \le 1$. 
\end{theorem}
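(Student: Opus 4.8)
I would prove the two parts by reducing each to an elementary sign or modulus estimate.

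In part~(1), for the implication $a\ge1/2\Rightarrow\zeta(\sigma,a)\ne0$ I plan to use monotonicity in the shift parameter. For fixed real $\sigma\in(0,1)$, term-by-term differentiation of the defining series (valid for $\Re s>1$) gives $\partial_a\zeta(s,a)=-s\,\zeta(s+1,a)$, and this identity extends to all $s\ne1$ by analytic continuation; at $s=\sigma$ it becomes $\partial_a\zeta(\sigma,a)=-\sigma\sum_{n\ge0}(n+a)^{-\sigma-1}<0$, so $a\mapsto\zeta(\sigma,a)$ is strictly decreasing on $(0,\infty)$. Hence for $a\ge1/2$ one has $\zeta(\sigma,a)\le\zeta(\sigma,1/2)=(2^\sigma-1)\zeta(\sigma)$, and since $2^\sigma-1>0$ while $\zeta(\sigma)<0$ on $(0,1)$ (because the Dirichlet eta function satisfies $\eta(\sigma)=(1-2^{1-\sigma})\zeta(\sigma)$ with $\eta(\sigma)>0$, an alternating series with decreasing positive terms, and $1-2^{1-\sigma}<0$ there), this forces $\zeta(\sigma,a)<0$. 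For the reverse implication, when $a<1/2$ I would exhibit a zero by combining the classical value $\zeta(0,a)=\tfrac12-a>0$ with the simple pole of $\zeta(s,a)$ at $s=1$ (residue $1$), which makes $\zeta(\sigma,a)\to-\infty$ as $\sigma\to1^-$; continuity of the real-valued function $\zeta(\cdot,a)$ on $[0,1)$ and the intermediate value theorem then give a zero in $(0,1)$, so the two implications together yield the equivalence.

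For part~(2) I would use summation by parts. Fix $0<\sigma<1$, $0<a\le1$, and $z$ with $0<|z|\le1$ and $z\ne1$, and put $w_n:=(n+a)^{-\sigma}-(n+1+a)^{-\sigma}>0$. Since the partial sums $\sum_{n\le N}z^n$ are bounded, Abel summation is legitimate for $\sigma>0$ and gives
\[
\Phi(\sigma,a,z)=\frac{1}{1-z}\Bigl(a^{-\sigma}-\sum_{n=0}^{\infty}z^{n+1}w_n\Bigr),
\]
where the telescoping identity $\sum_{n\ge0}w_n=a^{-\sigma}$ has been used. Now $\bigl|\sum_n z^{n+1}w_n\bigr|\le\sum_n|z|^{n+1}w_n\le\sum_n w_n=a^{-\sigma}$, and here the first inequality is strict unless all $z^{n+1}$ share one argument (i.e.\ $z$ is a positive real) while the second is strict unless $|z|=1$; thus for $z\ne1$ at least one inequality is strict and $\bigl|\sum_n z^{n+1}w_n\bigr|<a^{-\sigma}$. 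Consequently $a^{-\sigma}-\sum_n z^{n+1}w_n\ne0$, and since $1/(1-z)\ne0$ we conclude $\Phi(\sigma,a,z)\ne0$. As an alternative route I would use the representation $\Phi(s,a,z)=\Gamma(s)^{-1}\int_0^\infty x^{s-1}e^{-ax}(1-ze^{-x})^{-1}\,dx$, valid for $\sigma>0$ under the standing hypotheses $0<|z|\le1$, $z\ne1$, from which $\Re\,\Phi(\sigma,a,z)>0$ because $\Re\,(1-ze^{-x})^{-1}=\bigl(1-(\Re z)e^{-x}\bigr)\,|1-ze^{-x}|^{-2}>0$ for every $x>0$, using $\Re z<1$ (which holds since $z\ne1$ and $|z|\le1$).

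Most ingredients are routine: the analytic continuation of $\partial_a\zeta(s,a)=-s\,\zeta(s+1,a)$, the legitimacy of the Abel-summed identity for $\sigma>0$, the special value $\zeta(0,a)=\tfrac12-a$, and the pole of $\zeta(s,a)$ at $s=1$. The point that needs genuine care is the strict inequality $\bigl|\sum_n z^{n+1}w_n\bigr|<a^{-\sigma}$ for \emph{every} $z\ne1$ in the closed unit disc: one must note that equality in the triangle inequality would force all the $z^{n+1}$ to have the same argument, hence $z>0$, and that this residual case is already absorbed by the strict bound $\sum_n|z|^{n+1}w_n<\sum_n w_n$ that holds whenever $|z|<1$. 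This dichotomy---and, for the integral-representation variant, the equivalent fact that $\Re z<1$ precisely when $z\ne1$ (given $|z|\le1$)---is the crux of part~(2); part~(1) presents no comparable difficulty once the monotonicity in $a$ is noticed.
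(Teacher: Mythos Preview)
Your argument is correct in both parts, but your route differs from the paper's in interesting ways.

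For part~(1) with $a\ge1/2$, the paper establishes the integral representation
\[
\Gamma(\sigma)\zeta(\sigma,a)=\int_0^\infty H(a,x)\,x^{\sigma-1}\,dx,\qquad H(a,x):=\frac{e^{(1-a)x}}{e^x-1}-\frac1x,
\]
valid for $0<\sigma<1$, and then proves directly that $H(a,x)<0$ for all $x>0$ precisely when $a\ge1/2$. Your monotonicity argument, reducing to the explicit value $\zeta(\sigma,1/2)=(2^\sigma-1)\zeta(\sigma)<0$ via $\partial_a\zeta(\sigma,a)=-\sigma\zeta(\sigma+1,a)<0$, is more elementary and avoids integral representations altogether. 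The paper's investment in $H(a,x)$ pays off later, however: the same kernel reappears in the integral representation of the double zeta function $\zeta_2(s_1,s_2\,;a)$ (Proposition~2.5), where the negativity of $H(a,x+y)$ and $H(1,y)$ drives the proof of Theorem~1.2(1). Your approach does not immediately transfer to that setting. For $a<1/2$ your intermediate-value argument is exactly the paper's.

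For part~(2), the paper uses the integral representation $\Gamma(s)\Phi(s,a,z)=\int_0^\infty x^{s-1}e^{(1-a)x}(e^x-z)^{-1}\,dx$ and then splits into the cases $z\in[-1,1)$ (integrand positive, hence $\Phi>0$) and $z\notin\mathbb{R}$ (imaginary part of integrand has fixed sign, hence $\Im\Phi\ne0$). Your Abel-summation proof is genuinely different and completely elementary, needing nothing beyond the convergent Dirichlet series itself; the strictness dichotomy you isolate is indeed the only subtle point and you handle it correctly. Your integral-representation variant is the same formula as the paper's (after the substitution $e^{(1-a)x}/(e^x-z)=e^{-ax}/(1-ze^{-x})$), but your observation that $\Re(1-ze^{-x})^{-1}>0$ whenever $\Re z<1$ gives a single unified conclusion $\Re\Phi(\sigma,a,z)>0$, which is slightly cleaner than the paper's two-case split.
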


When $z=1$, the Hurwitz zeta function $\zeta (\sigma,a) := \Phi (\sigma,a,1)>0$ for $\sigma >1$ from the series expression $\sum_{n=0}^\infty (n+a)^{-s}$. Berndt showed that $\zeta (s,a) - a^{-s}$ has no zeros on $|s-1| \le 1$ when $0 \le a \le 1$ in \cite[Theorem 3]{Ber}. Note that $\zeta (\sigma,a) - a^{-\sigma} = \zeta (s)$ if $a=0$ and \cite[Theorem 3]{Ber} mentioned above implies that 
$$
\zeta (\sigma,a+1) = \zeta (\sigma,a) - a^{-\sigma} \ne 0
$$ 
for any $0 < \sigma <1$ and $0 \le a \le 1$. In \cite[Theorem 3]{Spira}, Spira proved that if $\sigma \le -4a-1-2[1-2a]$ and $|t|\le1$, then $\zeta (s,a) \ne 0$ except for zeros on the negative real line, one in each interval $(-2n-4a-1,-2n-4a+1)$, where $n \in {\mathbb{N}}$ and $n \ge 1-2a$. Some analogous results for the Lerch zeta function $\Phi (s,a,e^{2 \pi i \theta})$, where $0 < \theta \le 1$ are proved by Garunk\v{s}tis and Laurin\v{c}ikas in \cite{GauLa} (see also \cite[Section 8]{LauGa}). Recently, Schipani \cite{Schi} showed that $\zeta (\sigma,a)$ has no zeros and is actually negative for $0<\sigma <1$ and $1-\sigma \le a$. Note that we prove that $\zeta (\sigma,a) <0$ for any $0<\sigma <1$ when $a\ge 1/2$ during the proof process of Theorem \ref{th:hldz1} (see (\ref{ieq:hzn})). Denote the polylogarithm by ${\rm{Li}}_s (z) := z\Phi (s,1,z) = \sum_{n=1}^\infty z^n n^{-s}$. 
More than 100 years ago, Roy \cite{Roy} proved that ${\rm{Li}}_\sigma (z) \ne 0$ for all $|z| \le 1$, $z \ne 1$ and $\sigma >0$ (for zeros of polylogarithms ${\rm{Li}}_s (z)$, we can refer to \cite[Section 8]{OS}).

Let $L(s,\chi) := \sum_{n=1}^\infty \chi (n) n^{-s}$ be the Dirichlet {\textit{L}}-function with a Dirichlet character $\chi$. About 80 years ago, Siegel \cite{Sie} (see also \cite[Theorem 11.11]{MoVau}) showed for any $\varepsilon >0$, there exist $C_\varepsilon>0$ such that, if $\chi$ is a real primitive Dirichlet character modulo $q$, then $L(1,\chi) > C_\varepsilon q^{-\varepsilon}$. It is expected that $L(\sigma,\chi) \ne 0$ for all $0 <\sigma <1$. Namely, it is conjectured that so-called Siegel zeros of Dirichlet {\textit{L}}-functions do not exist. Let $\varphi$ be the Euler totient function and $\chi$ be a primitive Dirichlet character of conductor of $q$. Then the following relations between Dirichlet $L$-functions and Hurwitz zeta functions are well-known. 
\begin{equation*}
\begin{split}
&L (s,\chi) = \sum_{r=1}^q \sum_{n=0}^{\infty} \frac{\chi (r+nq)}{(r+nq)^s} = \sum_{r=1}^q \chi (r) \sum_{n=0}^{\infty} \frac{1}{(r+nq)^s} = q^{-s} \sum_{r=1}^q \chi (r) \zeta (s,r/q) ,\\
&\zeta (s,r/q) = \sum_{n=0}^\infty \frac{1}{(n+r/q)^s} = \sum_{n=0}^\infty \frac{q^s}{(r+qn)^s} =
\frac{q^s}{\varphi (q)} \sum_{\chi \!\!\! \mod q} \overline{\chi} (r) L(s,\chi) .
\end{split}
\end{equation*}
The relations between Hurwitz zeta functions and polylogarithms are expressed as
$$
\zeta (s,r/q) = \sum_{n=1}^q e^{-2\pi i rn/q} {\rm{Li}}_s  (e^{2\pi i rn/q}), \qquad
{\rm{Li}}_s  (e^{2\pi i r/q}) = q^{-s} \sum_{n=1}^q e^{2\pi irn/q} \zeta (s,n/q). 
$$
Hence, we have the following relations between Dirichlet $L$-functions and polylogarithms
\begin{equation*}
\begin{split}
&L (s,\chi) = \frac{1}{G(\overline{\chi})} \sum_{n=1}^{\infty} \sum_{r=1}^q  \frac{\overline{\chi} (r) e^{2\pi i rn/q}}{n^s} 
= \frac{1}{G(\overline{\chi})} \sum_{r=1}^q \overline{\chi} (r) {\rm{Li}}_s  (e^{2\pi i r/q}) ,\\
&{\rm{Li}}_s  (e^{2\pi i r/q}) = q^{-s} \sum_{n=1}^q e^{2\pi irn/q} \zeta (s,n/q) =
\frac{1}{\varphi (q)} \sum_{\chi \!\!\! \mod q} G(\overline{\chi}) L(s,\chi) ,
\end{split}
\end{equation*}
where $G(\overline{\chi}) := \sum_{n=1}^q \overline{\chi}(n)e^{2\pi irn/q}$ denotes the Gauss sum associated to $\overline{\chi}$. It should be emphasized that we have $\zeta (\sigma,a) \ne 0$ for all $0 <\sigma <1$ if and only if $a \ge 1/2$ and ${\rm{Li}}_\sigma (z) \ne 0$ for all $0 <\sigma <1$ and $|z| \le 1$ from Theorem \ref{th:hlz1} despite of the six relations above and the difficulty of the Siegel zero's problem (see also the paper \cite{CGPS} by Conrey, Granville, Poonen and Soundararajan).

\subsection{Reals zeros of Hurwitz-Lerch type of Euler-Zagier double zeta functions}
As a double sum and two variable version of the Hurwitz-Lerch zeta function $\Phi(s,a,z)$, we define the following function. 
\begin{definition}[see {\cite[(1)]{KomoQua}}]\label{def:ler}
For $0 < a \le 1$, $s_1,s_2, z_1, z_2 \in {\mathbb{C}}$ and $0<|z_1|, |z_2| \le 1$, the Hurwitz-Lerch type of Euler-Zagier double zeta function $\Phi_2(s_1,s_2,a,z_1,z_2)$ is defined by
\begin{equation}\label{eq:defezhdz}
\Phi_2(s_1,s_2,a,z_1,z_2) := \sum_{m=0}^\infty \frac{z_1^m}{(m+a)^{s_1}} \sum_{n=1}^\infty \frac{z_2^{n-1}}{(m+n+a)^{s_2}}.
\end{equation}
\end{definition}
The function $\Phi_2 (s_1,s_2,a,z_1,z_2)$ can be continued meromorphically to the whole space ${\mathbb{C}}^2$ by Komori's result \cite[Theorem 3.14]{KomoQua} (see also Lemma \ref{lem:ezhd1} and Proposition \ref{pro:ezh2}). In this paper, we prove the following theorem. 
\begin{theorem}\label{th:hldz1}
We have the following:\\
${\rm{(1)}}$. Let $z_1=z_2=1$. Then $\Phi_2 (\sigma_1,\sigma_2,a,1,1) \ne 0$ for all $0 <\sigma_1 <1$, $\sigma_2>1$ and $1 < \sigma_1+\sigma_2 <2$  if and only if $a \ge 1/2$. \\
${\rm{(2)}}$. Let $z_1=1$ and $z_2 \ne 1$. Then $\Phi_2 (\sigma_1,\sigma_2,a,1,z_2) \ne 0$ for all $\sigma_1 > 1$, $\sigma_2 >0$ and $0 < a \le 1$. \\
${\rm{(3)}}$. Let $z_1 \ne 1$ and $z_2 = 1$. Then $\Phi_2 (\sigma_1,\sigma_2,a,z_1,1) \ne 0$ for all $\sigma_1 >0$, $\sigma_2 >1$ and $0 < a \le 1$. \\
${\rm{(4)}}$. Let $z_1 \ne 1$ and $z_2 \ne 1$. Then $\Phi_2 (\sigma_1,\sigma_2,a,z_1,z_2) \ne 0$ for all $\sigma_1 >0$, $\sigma_2 >0$ and $0 < a \le 1$. 
\end{theorem}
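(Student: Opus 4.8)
The plan is to reduce each of the four cases to positivity/sign statements that ultimately rest on Theorem \ref{th:hlz1}. The basic strategy is to exhibit, for the relevant ranges of $\sigma_1,\sigma_2$, an integral or series representation of $\Phi_2(\sigma_1,\sigma_2,a,z_1,z_2)$ in which the integrand (or summand) has a constant sign, so that the value cannot be $0$. For parts (2), (3), (4), where $z_1 \ne 1$ or $z_2 \ne 1$, I expect to use the outer sum in $m$: writing
\begin{equation*}
\Phi_2(s_1,s_2,a,z_1,z_2) = \sum_{m=0}^\infty \frac{z_1^m}{(m+a)^{s_1}}\,\Phi(s_2,m+a,z_2),
\end{equation*}
so that the inner factor is a Hurwitz--Lerch zeta value at shifted parameter $m+a$. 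When $z_2 \ne 1$, part (2) of Theorem \ref{th:hlz1} gives $\Phi(\sigma_2,m+a,z_2)\ne 0$ for $0<\sigma_2<1$; but here $\sigma_2>0$ is unrestricted, so I would first reduce to $0<\sigma_2<1$ by the functional/shift relation $\Phi(s,b,z)=z^{-1}\bigl(\Phi(s,b-1,z) - (b-1)^{-s}\bigr)$ iterated, or more cleanly just prove directly that for $z_2\ne1$ the whole double series, after the $m$-summation collapses via $z_1=1$, is a positive (resp.\ negative) real number. The cleanest route: for $z_1=1$, $\Phi_2(\sigma_1,\sigma_2,a,1,z_2)=\sum_{m=0}^\infty (m+a)^{-\sigma_1}\Phi(\sigma_2,m+a,z_2)$, and since $\sigma_1>1$ this series converges absolutely; it then suffices to show each $\Phi(\sigma_2,m+a,z_2)$ is real of one fixed sign, or to pair up conjugate terms $z_2,\bar z_2$ and bound the real part away from $0$.

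For the genuinely two-dimensional cases I would instead pass to a Mellin--Barnes / integral representation. Using $\Gamma(s_2)(m+n+a)^{-s_2} = \int_0^\infty u^{s_2-1}e^{-(m+n+a)u}\,du$ and summing the geometric-type series in $n$ and $m$ first, one gets
\begin{equation*}
\Gamma(s_2)\,\Phi_2(s_1,s_2,a,z_1,z_2) = \int_0^\infty u^{s_2-1}\, e^{-au}\,\frac{z_2 e^{-u}}{1-z_2 e^{-u}}\,\Phi(s_1,a,z_1 z_2 e^{-u})\,\bigl(\text{correction for }z_1 z_2\bigr)\,du,
\end{equation*}
or a similarly structured kernel; the point is that for real $\sigma_1,\sigma_2>0$ and $z_1=z_2=1$ the kernel $u^{\sigma_2-1}e^{-au}/(1-e^{-u})$ times a Hurwitz-type factor will have a sign governed by $\zeta(\sigma_1,\cdot)$, which is negative for $a\ge1/2$ by inequality (\ref{ieq:hzn}) and changes sign (or one can make $\Phi_2$ vanish) for $a<1/2$. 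This yields the ``if and only if'' of part (1): the ``if'' direction follows from sign-definiteness of the integrand on $1<\sigma_1+\sigma_2<2$, and the ``only if'' direction is obtained by an intermediate value argument — show $\Phi_2(\sigma_1,\sigma_2,a,1,1)$ takes both signs (e.g.\ examine the behaviour as $\sigma_1\to1^-$ where $\zeta(\sigma_1,a)\to-\infty$ for small $a$ forces a sign opposite to the $\sigma_1\to0^+$ regime), hence a real zero exists.

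The main obstacle I anticipate is part (1), specifically the ``only if'' direction and the bookkeeping of the constraint region $0<\sigma_1<1,\ \sigma_2>1,\ 1<\sigma_1+\sigma_2<2$. One must locate the analytic continuation carefully (via Lemma \ref{lem:ezhd1} and Proposition \ref{pro:ezh2}, i.e.\ Komori's continuation) and verify that in this triangular region the relevant representation is valid with no spurious pole contributions, then extract from it an expression of the form (convergent positive series) $\times\ \zeta(\sigma_1,a+k)$ or an integral of $\zeta(\sigma_1,a+u\text{-shift})$ against a positive kernel — so that the sign of $\Phi_2$ is exactly the sign of a Hurwitz zeta value, letting Theorem \ref{th:hlz1}(1) transfer. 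Getting a representation whose kernel is genuinely sign-definite on the whole triangle — rather than only on a sub-region — is the delicate step; if a single representation does not suffice I would split the triangle and argue piecewise, or use the recursion $\Phi_2(s_1,s_2,a,1,1)=\zeta(s_1+s_2,a)\cdot(\text{something}) + \Phi_2(s_1+1,s_2-1,a,1,1)+\cdots$ coming from partial fractions in $(m+a)(m+n+a)$ to push $\sigma_2$ above the region where things are manifestly positive. Parts (2)–(4) should then be comparatively routine, as the presence of $z_i\ne1$ provides geometric decay that makes the relevant kernels/series absolutely convergent and sign-controlled directly by Theorem \ref{th:hlz1}(2) and Roy's theorem on polylogarithms.
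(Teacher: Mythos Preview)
Your proposal has the right overall shape --- integral/series representations plus sign arguments, intermediate value theorem for ``only if'' --- but there are two genuine gaps.

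\textbf{Part (1), ``if'' direction.} None of the representations you sketch yields a sign-definite integrand on the triangle $0<\sigma_1<1$, $\sigma_2>1$, $1<\sigma_1+\sigma_2<2$. The kernel you write, with a factor $\Phi(s_1,a,z_1 z_2 e^{-u})$ or $\zeta(\sigma_1,a+\text{shift})$, does not do the job: as $u\to 0^+$ that factor reproduces the divergence responsible for the pole at $\sigma_1+\sigma_2=2$, so the integral does not converge on the triangle and no single sign-definite representation emerges. The paper's device is to subtract the pole from the inner Mellin kernel \emph{before} integrating: set $H(a,x)=\dfrac{e^{(1-a)x}}{e^x-1}-\dfrac{1}{x}$, prove (Lemma~\ref{lem:negdefi}) that $H(a,x)<0$ for all $x>0$ iff $a\ge 1/2$, and then establish (Proposition~\ref{pro:ezh2}) that precisely on the triangle
\[
\Gamma(\sigma_1)\Gamma(\sigma_2)\,\zeta_2(\sigma_1,\sigma_2;a)=\int_0^\infty\!\!\!\int_0^\infty \frac{y^{\sigma_2-1}}{e^y-1}\,H(a,x+y)\,x^{\sigma_1-1}\,dx\,dy
+\int_0^\infty\!\!\!\int_0^\infty \frac{x^{\sigma_1-1}}{x+y}\,H(1,y)\,y^{\sigma_2-1}\,dx\,dy,
\]
with both integrands negative when $a\ge 1/2$. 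The subtraction of $1/(x+y)$, together with the beta-type identity $\int_0^\infty x^{s_1-1}(x+y)^{-1}dx=y^{s_1-1}\Gamma(s_1)\Gamma(1-s_1)$, is the missing idea; your partial-fractions recursion does not produce it. For ``only if'' your intermediate-value sketch is in the right spirit; the paper makes it precise via the Akiyama--Ishikawa expansion (\ref{eq:AI1}), computing $\lim_{\varepsilon\to 0^+}\zeta_2(1-2\varepsilon,1+\varepsilon;a)=-\infty$ and, choosing $\sigma_0$ with $\zeta(\sigma_0,a)>0$, $\lim_{\varepsilon\to 0^+}\zeta_2(\sigma_0-\varepsilon,1+\varepsilon;a)=+\infty$.

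\textbf{Parts (3) and (4).} Your series $\sum_m z_1^m(m+a)^{-\sigma_1}\Phi(\sigma_2,m+1+a,z_2)$ does not give nonvanishing when $z_1\ne 1$: each term is nonzero, but the phases $z_1^m$ rotate, and nothing prevents the sum from hitting $0$. (For part (2), with $z_1=1$, your approach does work once you observe that the positivity/imaginary-part argument behind Theorem~\ref{th:hlz1}(2) is valid for all $a>0$ and $\sigma>0$, not just $0<a\le 1$; that is a minor extension.) The paper instead applies the Mellin transform in \emph{both} variables (Lemma~\ref{lem:ezhd1}):
\[
\Gamma(s_1)\Gamma(s_2)\,\Phi_2(s_1,s_2,a,z_1,z_2)=\int_0^\infty\!\!\!\int_0^\infty \frac{y^{s_2-1}}{e^y-z_2}\cdot\frac{x^{s_1-1}e^{(1-a)(x+y)}}{e^{x+y}-z_1}\,dx\,dy,
\]
absolutely convergent in the stated ranges, and argues by cases: if $z_1,z_2\in[-1,1)$ the integrand is positive; if exactly one is non-real the imaginary part has constant sign; if both are non-real one multiplies numerator and denominator by $(e^y-\overline{z_2})(e^{x+y}-\overline{z_1})$ and splits according to the sign of $\Im z_1\cdot\Im z_2$, showing that either the imaginary part or the real part of the numerator keeps a fixed sign for all $x,y>0$. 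This last case analysis is not covered by your ``pair up conjugate terms'' suggestion.
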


When $z_1=z_2=1$, the function above can be regarded as a special case of \cite[(3.2)]{Ma98} which gives an application to special values of Hecke $L$-series of real quadratic fields. Note that Atkinson \cite{Atkinson} obtained an analytic continuation for $\zeta_2(s_1,s_2 \,;a) := \Phi_2(s_1,s_2,a,z_1,z_2)$ with $a=1$ in order to study the mean square $\int_0^T |\zeta (1/2+it)|^2dt$ more than 60 years ago. Matsumoto \cite{Maf04} gave not only an analytic continuation to whole ${\mathbb{C}}^2$ plane but also a functional equation for this kind of zeta functions. Zeros of the Hurwitz type of Euler-Zagier double zeta function $\zeta_2(s_1,s_2 \,;a)$ (after the continuation) at negative integer points are discussed by Akiyama, Egami and Tanigawa \cite{AET}, Akiyama and Tanigawa \cite{Aki2}, Kelliher and Masri \cite{KeMa} and Zhao \cite{Zhao}. We have to remark that Theorem \ref{th:hldz1} (1) indicates the existence of a real zero of $\zeta_2(s_1,s_2 \,;a)$ off negative integer points when $0<a<1/2$. Related to this problem, we have the following proposition. 
\begin{proposition}\label{pro:rezeroezh}
The function $\zeta_2 (\sigma,\sigma \,; a)$ has at least one real zero for $1/2< \sigma <1$. Hence, there exist $1/2 < \sigma_1,\sigma_2 <1$ such that $\zeta_2 (\sigma_1,\sigma_2 \,; a)=0$. 
\end{proposition}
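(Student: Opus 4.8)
The plan is to reduce the diagonal values $\zeta_2(\sigma,\sigma;a)$ to ordinary Hurwitz zeta values via the harmonic product (stuffle) relation, and then run an intermediate value argument exploiting the opposite blow‑ups of $\zeta(\sigma,a)$ and $\zeta(2\sigma,a)$ at the two endpoints $\sigma=1/2$ and $\sigma=1$. First, for $\Re(s_1),\Re(s_2)>1$ I would split
\[
\zeta(s_1,a)\,\zeta(s_2,a)=\sum_{m,n\ge 0}\frac{1}{(m+a)^{s_1}(n+a)^{s_2}}
\]
according to $m<n$, $m>n$ and $m=n$; reindexing $n=m+k$ with $k\ge 1$ in the first two pieces turns them into the defining series of $\Phi_2(s_1,s_2,a,1,1)$ and $\Phi_2(s_2,s_1,a,1,1)$, so that
\[
\zeta(s_1,a)\,\zeta(s_2,a)=\zeta_2(s_1,s_2;a)+\zeta_2(s_2,s_1;a)+\zeta(s_1+s_2,a).
\]
By Komori's meromorphic continuation of $\Phi_2$ to $\C^2$ \cite{KomoQua} (see also Lemma \ref{lem:ezhd1} and Proposition \ref{pro:ezh2}), the singular locus of $\Phi_2(\,\cdot\,,\,\cdot\,,a,1,1)$ lies in the hyperplanes $s_2=1$ and $s_1+s_2\in\{2,1,0,-1,\dots\}$, none of which contains the diagonal line $s_1=s_2$. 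Hence $\sigma\mapsto\zeta_2(\sigma,\sigma;a)$ is meromorphic in one variable and the identity above persists by uniqueness of analytic continuation; setting $s_1=s_2=\sigma$ gives
\[
\zeta_2(\sigma,\sigma;a)=\tfrac12\bigl(\zeta(\sigma,a)^2-\zeta(2\sigma,a)\bigr).
\]
In particular the right‑hand side is real‑analytic on the open interval $(1/2,1)$, its only nearby poles being at $\sigma=1/2$ (coming from $\zeta(2\sigma,a)$) and at $\sigma=1$ (coming from $\zeta(\sigma,a)^2$).

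Next I would study the two endpoints using only the fact that $\zeta(s,a)$ has a simple pole at $s=1$ with residue $1$. As $\sigma\to 1/2^{+}$ we have $\zeta(2\sigma,a)=\frac{1}{2\sigma-1}+O(1)\to+\infty$ while $\zeta(\sigma,a)^2\to\zeta(1/2,a)^2\ge 0$ stays bounded, so $\zeta_2(\sigma,\sigma;a)\to-\infty$. As $\sigma\to 1^{-}$ we have $\zeta(\sigma,a)=\frac{1}{\sigma-1}+O(1)\to-\infty$, hence $\zeta(\sigma,a)^2\to+\infty$, while $\zeta(2\sigma,a)\to\zeta(2,a)<\infty$, so $\zeta_2(\sigma,\sigma;a)\to+\infty$. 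Since $\sigma\mapsto\zeta_2(\sigma,\sigma;a)$ is continuous on $(1/2,1)$ and changes sign there, the intermediate value theorem yields some $\sigma_0\in(1/2,1)$ with $\zeta_2(\sigma_0,\sigma_0;a)=0$; taking $\sigma_1=\sigma_2=\sigma_0$ proves the last assertion, for every $0<a\le 1$.

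The only genuinely delicate point is the bookkeeping for the meromorphic continuation: one must confirm both that $\zeta_2(\,\cdot\,,\,\cdot\,;a)$ has no pole on the diagonal for $1/2<\sigma<1$ and that the stuffle relation, a priori valid only for $\Re(s_1),\Re(s_2)>1$, genuinely extends to this range. Both are immediate from Komori's description of the singular locus together with the product formula, whose right‑hand side $\tfrac12\bigl(\zeta(\sigma,a)^2-\zeta(2\sigma,a)\bigr)$ is visibly holomorphic on $(1/2,1)$ — this in turn re‑certifies that no diagonal pole intervenes. Everything else, namely the reindexing identity and the two endpoint asymptotics, is routine.
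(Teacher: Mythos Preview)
Your proof is correct and follows essentially the same route as the paper: derive the stuffle identity $2\zeta_2(s,s;a)=\zeta(s,a)^2-\zeta(2s,a)$, use it to continue $\zeta_2(\sigma,\sigma;a)$ to $(1/2,1)$, read off the opposite blow-ups at the two endpoints from the simple pole of $\zeta(s,a)$ at $s=1$, and apply the intermediate value theorem. The paper invokes the integral formula \eqref{eq:268} for the endpoint limits where you invoke the residue directly, and it does not spell out the singular-locus bookkeeping as you do, but these are cosmetic differences only.
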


\section{Proofs}
\subsection{Proof of Theorem \ref{th:hlz1}}
In order to prove (1) of Theorem \ref{th:hlz1}, we define $H(a,x)$ by
\begin{equation}
\label{eq:defHax}
H(a,x) := \frac{e^{(1-a)x}}{e^x-1} - \frac{1}{x} =  \frac{xe^{(1-a)x} - e^x +1}{x(e^x-1)}, \qquad x>0.
\end{equation}

\begin{lemma}
\label{lem:12.2ac}
For $0 < \sigma <1$ we have the integral representation
\begin{equation}
\label{eq:gamhurac}
\Gamma (s) \zeta (s,a) =  \int_0^\infty \biggl( \frac{e^{(1-a)x}}{e^x-1} - \frac{1}{x} \biggr) x^{s-1} dx = 
\int_0^\infty \!\!\! H(a,x) x^{s-1} dx.
\end{equation}
\end{lemma}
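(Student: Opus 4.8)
The plan is to derive the stated formula from the classical Mellin representation of $\Gamma(s)\zeta(s,a)$ valid for $\sigma>1$, and then push it into the strip $0<\sigma<1$ by isolating the term responsible for the pole at $s=1$. First I would record, for $\sigma>1$, the identity
\[
\Gamma(s)\zeta(s,a)=\int_0^\infty \frac{e^{(1-a)x}}{e^x-1}\,x^{s-1}\,dx ,
\]
obtained by inserting $\Gamma(s)(n+a)^{-s}=\int_0^\infty x^{s-1}e^{-(n+a)x}\,dx$ into the Dirichlet series, summing over $n\ge 0$, and interchanging sum and integral; this interchange is legitimate because $\sum_{n\ge0}\int_0^\infty x^{\sigma-1}e^{-(n+a)x}\,dx=\Gamma(\sigma)\zeta(\sigma,a)<\infty$, so Fubini applies, after which $\sum_{n\ge0}e^{-nx}=(1-e^{-x})^{-1}$ and $e^{-ax}(1-e^{-x})^{-1}=e^{(1-a)x}(e^x-1)^{-1}$ give the stated integrand.

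Next I would split the integral at $x=1$. The tail $\int_1^\infty \frac{e^{(1-a)x}}{e^x-1}x^{s-1}\,dx$ is an entire function of $s$. On $(0,1)$ I would use the splitting $\frac{e^{(1-a)x}}{e^x-1}=\frac1x+H(a,x)$ from \eqref{eq:defHax} together with $\int_0^1 x^{s-2}\,dx=(s-1)^{-1}$ (valid for $\sigma>1$) to obtain, for $\sigma>1$,
\[
\Gamma(s)\zeta(s,a)=\frac{1}{s-1}+\int_0^1 H(a,x)\,x^{s-1}\,dx+\int_1^\infty \frac{e^{(1-a)x}}{e^x-1}\,x^{s-1}\,dx .
\]
From \eqref{eq:defHax} one checks that $H(a,x)=\tfrac12-a+O(x)$ as $x\to0^+$, so $H(a,x)$ is bounded near the origin and $\int_0^1 H(a,x)x^{s-1}\,dx$ is holomorphic for $\sigma>0$. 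Hence the right-hand side is meromorphic on $\sigma>0$ with a single simple pole at $s=1$, and by the identity theorem the displayed equality continues to hold for all $\sigma>0$, $s\ne1$.

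Finally, for $0<\sigma<1$ I would reabsorb the pole term. Since $x^{s-1}\to0$ as $x\to\infty$ when $\sigma<1$, we have $\int_1^\infty x^{s-2}\,dx=-(s-1)^{-1}$, i.e.\ $(s-1)^{-1}=-\int_1^\infty x^{s-1}x^{-1}\,dx$. Substituting this into the previous display and recombining the two tail integrals yields
\[
\Gamma(s)\zeta(s,a)=\int_0^1 H(a,x)\,x^{s-1}\,dx+\int_1^\infty\Bigl(\frac{e^{(1-a)x}}{e^x-1}-\frac1x\Bigr)x^{s-1}\,dx=\int_0^\infty H(a,x)\,x^{s-1}\,dx ,
\]
which is the assertion of the lemma. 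To close I would verify that the last integral converges precisely on the strip $0<\sigma<1$: near $x=0$ the bound $H(a,x)=O(1)$ forces $\sigma>0$, while near $x=\infty$ one has $H(a,x)=-x^{-1}+O(e^{-ax})$ (using $0<a\le1$), which forces $\sigma<1$.

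The only point requiring care — there is no genuinely hard step — is the bookkeeping of the half-plane in which each manipulation is legitimate: $\int_0^1 x^{s-2}\,dx$ needs $\sigma>1$, $\int_1^\infty x^{s-2}\,dx$ needs $\sigma<1$, and $\int_0^1 H(a,x)x^{s-1}\,dx$ needs $\sigma>0$. The clean way to handle this is to do all algebra with convergent integrals for $\sigma>1$, use analytic continuation to extend the resulting identity to $\sigma>0$, and only then specialize to $0<\sigma<1$, rather than attempting to manipulate divergent expressions on the strip directly.
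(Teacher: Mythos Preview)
Your proof is correct and follows essentially the same route as the paper: start from the classical Mellin integral for $\sigma>1$, split at $x=1$, subtract $1/x$ on $(0,1)$ to isolate the pole $1/(s-1)$, use the boundedness of $H(a,x)$ near $0$ to extend to $\sigma>0$, and then for $0<\sigma<1$ rewrite $1/(s-1)=-\int_1^\infty x^{s-2}\,dx$ and recombine. Your explicit invocation of analytic continuation and your final convergence check on the strip are slightly more careful than the paper's exposition, but the argument is the same.
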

\begin{proof}
When $\sigma >1$, it is well-known that 
$$
\Gamma (s) \zeta (s,a) = \int_0^\infty \frac{x^{s-1}e^{(1-a)x}}{e^x-1} dx 
$$
(see \cite[Theorem 12.2]{Apo}). Hence we have
\begin{equation}
\label{eq:key1}
\begin{split}
\Gamma (s) \zeta (s,a) = & 
\int_0^1 \frac{x^{s-1}e^{(1-a)x}}{e^x-1} dx + \int_1^\infty \frac{x^{s-1}e^{(1-a)x}}{e^x-1} dx \\ = &
\int_0^1 \biggl( \frac{e^{(1-a)x}}{e^x-1} - \frac{1}{x} \biggr) x^{s-1} dx + 
\int_0^1 x^{s-2} dx + \int_1^\infty \frac{x^{s-1}e^{(1-a)x}}{e^x-1} dx \\ = &
\int_0^1 \biggl( \frac{e^{(1-a)x}}{e^x-1} - \frac{1}{x} \biggr) x^{s-1} dx +
\int_1^\infty \frac{x^{s-1}e^{(1-a)x}}{e^x-1} dx + \frac{1}{s-1} .
\end{split}
\end{equation}
By the Taylor expansion of $e^x$, we have
\begin{equation}
\label{eq:haxtay}
\begin{split}
H(a,x) = & \,
\frac{x(\sum_{n=0}^\infty (1-a)^n x^n/n!) - \sum_{n=1}^\infty x^n/n!}{x(\sum_{n=1}^\infty x^n/n!)} \\= & \,
\frac{(1/2-a)x^2 + ((1-a)^2/2!-1/3!)x^3+ \cdots}{x^2+ x^3/2! + \cdots}. 
\end{split}
\end{equation}
Hence, for $\sigma >0$, it holds that
\begin{equation}\label{ieq:abcon1}
\int_0^1 \biggl| \frac{e^{(1-a)x}}{e^x-1} - \frac{1}{x} \biggr| \bigl| x^{s-1} \bigr| dx \ll
\int_0^1 x^{\sigma -1} dx < \infty .
\end{equation}
On the other hand, we have
$$
\frac{1}{s-1} = - \int_1^\infty \frac{x^{s-1}}{x} dx, \qquad 0 < \sigma <1. 
$$
Moreover, for $0 < \sigma <1$ one has
\begin{equation}\label{ieq:abcon2}
\begin{split}
&\int_1^\infty \biggl| \frac{e^{(1-a)x}}{e^x-1} \biggr| \bigl| x^{s-1} \bigr| dx \ll
\int_1^\infty \frac{e^{(1-a)x}}{e^x-1}x^{\sigma -1} dx < \infty , \\
&\int_1^\infty \bigl| x^{s-2} \bigr| dx = \int_1^\infty x^{\sigma-2}  dx = \frac{1}{1-\sigma} < \infty. 
\end{split}
\end{equation}
Therefore, the integral representation 
$$
\Gamma (s) \zeta (s,a) = \int_0^1 \biggl( \frac{e^{(1-a)x}}{e^x-1} - \frac{1}{x} \biggr) x^{s-1} dx +
\int_1^\infty \frac{x^{s-1}e^{(1-a)x}}{e^x-1} dx - \int_1^\infty \frac{x^{s-1}}{x} dx
$$
gives an analytic continuation for $0<\sigma <1$. Thus we obtain this Lemma. 
\end{proof}

\begin{lemma}\label{lem:negdefi}
The function $H(a,x)$ defined by (\ref{eq:defHax}) is negative for all $x>0$ if and only if $a \ge 1/2$. 
\end{lemma}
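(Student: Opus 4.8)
The sign of $H(a,x)$ is governed entirely by its numerator: since $x(e^x-1)>0$ for every $x>0$, the second expression in (\ref{eq:defHax}) shows that $H(a,x)<0$ is equivalent to
\[
N(a,x) := xe^{(1-a)x} - e^x + 1 < 0 .
\]
So the plan is to analyse $N(a,x)$ on $(0,\infty)$.

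For the ``only if'' direction I would argue by contraposition using the Taylor expansion (\ref{eq:haxtay}). That expansion gives $H(a,x)\to 1/2-a$ as $x\to 0^+$. Hence if $a<1/2$ the limit $1/2-a$ is strictly positive, so $H(a,x)>0$ for all sufficiently small $x>0$; in particular $H(a,\cdot)$ is not negative on all of $(0,\infty)$. Equivalently, negativity of $H(a,\cdot)$ on $(0,\infty)$ forces $a\ge 1/2$.

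For the ``if'' direction, the first step is a monotonicity reduction: differentiating in $a$ gives
\[
\frac{\partial}{\partial a} N(a,x) = -x^2 e^{(1-a)x} < 0 \qquad (x>0),
\]
so $N(a,x)$ is strictly decreasing in $a$, and it suffices to prove $N(1/2,x)<0$ for all $x>0$, i.e. $xe^{x/2}<e^x-1$. This I would obtain from the auxiliary function $F(x):=e^x-1-xe^{x/2}$, which satisfies $F(0)=0$ and
\[
F'(x) = e^x - e^{x/2} - (x/2)e^{x/2} = e^{x/2}\bigl(e^{x/2}-1-x/2\bigr) > 0 \qquad (x>0),
\]
the last inequality being the elementary bound $e^u>1+u$ for $u\ne 0$ applied with $u=x/2$. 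Thus $F(x)>0$, hence $N(1/2,x)=-F(x)<0$, for every $x>0$; combined with the monotonicity in $a$ this yields $N(a,x)\le N(1/2,x)<0$, i.e. $H(a,x)<0$, for all $a\ge 1/2$ and $x>0$.

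The argument is short, and the only genuinely analytic ingredient is the estimate at $a=1/2$; I expect the main (minor) obstacle to be spotting the right auxiliary function — equivalently, the substitution $y=e^{x/2}$, which turns the claim into $2y\ln y<y^2-1$ for $y>1$ — so that the resulting derivative factors and reduces to a textbook convexity inequality.
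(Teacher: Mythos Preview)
Your proof is correct and follows essentially the same line as the paper's: both reduce to the numerator $h(a,x)=xe^{(1-a)x}-e^x+1$, both handle the ``only if'' direction via the limit $H(a,0^+)=1/2-a$, and both finish the ``if'' direction by differentiating in $x$ and reducing to the elementary inequality $e^u>1+u$. The only structural difference is that you first use monotonicity in $a$ to reduce to the boundary case $a=1/2$, whereas the paper treats all $a\ge 1/2$ at once by observing that $h'(a,x)<0$ is equivalent to $1+(1-a)x<e^{ax}$, which follows from $1-a\le a$ together with $1+ax<e^{ax}$.
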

\begin{proof}
First suppose $0<a<1/2$. Then we have $\lim_{x \to +0} H(a,x)= 1/2 -a >0$ by (\ref{eq:haxtay}). Besides, one has $$
h(a,x):= x(e^x-1)H(a,x) = xe^{(1-a)x} - e^x +1<0
$$
when $x$ is sufficiently large by $e^{1-a} < e$. Hence the function $H(a,x)$ is not negative definite when $0<a<1/2$. 

Next suppose $a\ge 1/2$. Obviously, we have $x(e^x-1) >0$ for all $x>0$. Thus we only have to consider $h(a,x)$ which is the numerator of $H(a,x)$. It holds that $h(a,0)=0$. Thus we show the inequality
$$
h'(a,x) = (1-a)xe^{(1-a)x} + e^{(1-a)x} -e^x < 0 , \qquad x>0 .
$$
This inequality is equivalent to $(1-a)xe^{(1-a)x} + e^{(1-a)x} < e^x$, namely, $1+ (1-a)x< e^{ax}$. We can prove this inequality by the assumption $1-a \le a$ and the Taylor expansion of $e^{ax}= \sum_{n=0}^\infty (a x)^n/n!$. 
\end{proof}

\begin{proof}[Proof of (1) of Theorem \ref{th:hlz1}]
Let $0<a<1/2$. Then we have
$$
\zeta (0,a) = \frac{1}{2} -a > 0
$$
(see \cite[p.~268]{Apo}). Moreover, for any integer $N \ge 0$ and $\sigma >0$, we have
\begin{equation}\label{eq:268}
\zeta (s,a) = 
\sum_{n=0}^N \frac{1}{(n+a)^s} + \frac{(N+a)^{1-s}}{s-1} - s \int_N^\infty \frac{x - [x]}{(x+a)^{s+1}} dx, 
\end{equation}
where $[x]$ denotes the maximal integer less than or equal to $x$ (see \cite[Theorem 12.21]{Apo}). Thus it holds that $\zeta (\sigma,a) \in {\mathbb{R}}$ when $\sigma \in (0,1)$ and 
$$
\lim_{\sigma \to 1-0} \zeta (\sigma,a) = -\infty. 
$$
Hence $\zeta (s,a)$ has at least one zero in the interval $(0,1)$ when $0<a<1/2$. 

Secondly suppose $a \ge 1/2$. Then we have 
$$
\Gamma (\sigma) \zeta (\sigma,a) = 
\int_0^\infty \biggl( \frac{e^{(1-a)x}}{e^x-1} - \frac{1}{x} \biggr) x^{\sigma-1} dx,
\qquad 0 < \sigma < 1
$$
by the integral representation (\ref{eq:gamhurac}). It is well-known that $\Gamma (\sigma) >0$ for any $0 < \sigma < 1$. Thus we obtain 
\begin{equation}\label{ieq:hzn}
\zeta (\sigma,a) <0
\end{equation}
for all $0 < \sigma < 1$ by Lemma \ref{lem:negdefi} and the integral representation above. Therefore $\zeta (\sigma,a)$ does not vanish in the interval $(0,1)$ when $a \ge 1/2$. 
\end{proof}

Next we quote the following integral representation of Hurwitz-Lerch zeta function $\Phi (s,a,z)$ to show (2) of Theorem \ref{th:hlz1}.
\begin{lemma}[{see \cite[p.~53, (3)]{Er}}]
When $z \ne 1$, we have
\begin{equation}\label{eq:hlzinrp1}
\Phi (s,a,z) = \frac{1}{\Gamma (s)} \int_0^\infty \frac{x^{s-1} e^{(1-a)x}}{e^x-z} dx, \qquad \Re (s) >0. 
\end{equation}
\end{lemma}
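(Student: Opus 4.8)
The plan is to prove the identity first for $\sigma := \Re(s) > 1$ straight from the Dirichlet series, and then push it to the half-plane $\sigma > 0$ by analytic continuation, in the same spirit as the proof of Lemma \ref{lem:12.2ac}.

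First I would recall the classical Gamma integral $\Gamma(s)(n+a)^{-s} = \int_0^\infty x^{s-1}e^{-(n+a)x}\,dx$, valid for $\sigma > 0$ and $n+a > 0$. Multiplying by $z^n$ and summing over $n \ge 0$, for $\sigma > 1$ and $0 < |z| \le 1$ I would interchange the sum and the integral; this is legitimate by Tonelli/Fubini, since
$$
\sum_{n=0}^\infty |z|^n \int_0^\infty x^{\sigma-1}e^{-(n+a)x}\,dx = \Gamma(\sigma)\sum_{n=0}^\infty \frac{|z|^n}{(n+a)^\sigma} < \infty .
$$
This gives $\Gamma(s)\Phi(s,a,z) = \int_0^\infty x^{s-1}e^{-ax}\sum_{n=0}^\infty (ze^{-x})^n\,dx$. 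For $x > 0$ and $|z| \le 1$ one has $|ze^{-x}| = |z|e^{-x} < 1$, so the geometric series equals $(1-ze^{-x})^{-1}$; multiplying numerator and denominator by $e^x$ produces the integrand $x^{s-1}e^{(1-a)x}/(e^x-z)$, which establishes (\ref{eq:hlzinrp1}) for $\sigma > 1$.

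Next I would verify that, once $z \ne 1$, the integral on the right converges and defines a holomorphic function of $s$ on the whole strip $\sigma > 0$. Near $x = 0$ the denominator tends to $1-z \ne 0$, so the integrand is $O(x^{\sigma-1})$, which is integrable at the origin for $\sigma > 0$; near $x = +\infty$ we have $e^x - z \sim e^x$, and since $a > 0$ the integrand is $O(x^{\sigma-1}e^{-ax})$, decaying exponentially. These bounds are locally uniform in $s$, so by Morera's theorem (or differentiation under the integral sign) the integral is holomorphic for $\sigma > 0$. As it agrees with $\Gamma(s)\Phi(s,a,z)$ on $\sigma > 1$, and $\Phi(s,a,z)$ (for $z \ne 1$) continues holomorphically to all of $\mathbb{C}$, the identity of holomorphic functions extends to $\sigma > 0$; dividing by $\Gamma(s) \ne 0$ there yields the claim.

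The only point needing genuine care is the interchange of summation and integration in the range $\sigma > 1$ when $|z| = 1$ (the case $|z| < 1$ being even easier), together with making the analytic-continuation step precise; both are routine. The essential structural features are that $a > 0$ forces exponential decay at infinity and that $z \ne 1$ keeps $1 - z \ne 0$ at the origin — exactly the two inputs that let the integral representation reach the full half-plane $\Re(s) > 0$.
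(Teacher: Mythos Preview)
Your argument is correct. Note, however, that the paper does not actually prove this lemma: it is quoted directly from \cite[p.~53, (3)]{Er}, and the only additional remark the paper makes is the absolute-convergence bound (\ref{in:pfabcon}) in the proof of Theorem~\ref{th:hlz1}\,(2), which is exactly your observation that $e^x - z \ne 0$ near $x=0$ (since $z\ne 1$) and that $e^{(1-a)x}/(e^x-z)$ decays exponentially at infinity (since $a>0$). Your derivation --- Gamma integral plus geometric series for $\sigma>1$, then analytic continuation to $\sigma>0$ via these convergence estimates --- is the standard route and matches what the paper implicitly relies on.
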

\begin{proof}[Proof of (2) of Theorem \ref{th:hlz1}]
It should be noted that the integral representation (\ref{eq:hlzinrp1}) converges absolutely for $\sigma >0$ when $z \ne 1$ since one has $e^x-z \ne 0$ for any $x \ge 0$ and
\begin{equation}\label{in:pfabcon}
|\Phi (s,a,z) \Gamma (s)| \le 
\int_0^1 \frac{x^{\sigma-1} e^{(1-a)x}}{|e^x-z|} dx + \int_1^\infty \frac{x^{\sigma-1} e^{(1-a)x}}{|e^x-z|} dx
< \infty . 
\end{equation}
First suppose $z \in [-1,1)$. Then we have $e^{(1-a)x}>0$ and $e^x-z>0$ for all $x \ge 0$. Hence, for any $\sigma >0$, $0 < a \le 1$ and $z \in [-1,1)$,  we have
\begin{equation}\label{ineq:hlzposi1}
\Phi (\sigma,a,z)  >0. 
\end{equation}
Next suppose $z$ is not real. Then it holds that
$$
\Phi (s,a,z) = \frac{1}{\Gamma (s)} \int_0^\infty \frac{x^{s-1} e^{(1-a)x} (e^x-\overline{z})}{|e^x-z|^2} dx, 
\qquad \Re (s) >0,
$$
where $\overline{z}$ is the complex conjugate of $z$, from (\ref{eq:hlzinrp1}). Obviously we have
\begin{equation}\label{ineq:im1}
\Im \bigl( e^{(1-a)x} (e^x-\overline{z}) \bigr) = - e^{(1-a)x} \Im (\overline{z}) \,\,
\begin{matrix}
>0 & \mbox{if } \Im (\overline{z}) <0 \\
<0 & \mbox{if } \Im (\overline{z}) >0
\end{matrix}
\end{equation}
for all $x>0$. Therefore, it holds that
$$
\Im \bigl(\Phi (\sigma,a,z)\bigr) \ne 0
$$ 
for any $\sigma >0$ and $0 < a \le 1$ when $z$ is not real. 
\end{proof}

\subsection{Proofs of Theorem \ref{th:hldz1} and Proposition \ref{pro:rezeroezh}}
First, we show that the series expression (\ref{eq:defezhdz}) with $z_1=z_2=1$ converges absolutely when $\Re (s_1) >0$, $\Re (s_2) >1$ and $\Re (s_1+s_2)>2$. 
\begin{lemma}\label{lem:absezh}
For $\Re (s_1) >0$, $\Re (s_2) >1$ and $\Re (s_1+s_2)>2$, the series 
\begin{equation}\label{ser:1}
\sum_{m=0}^\infty \frac{1}{(m+a)^{s_1}} \sum_{n=1}^\infty \frac{1}{(m+n+a)^{s_2}} = 
\sum_{0\le n_1 < n_2} \frac{1}{(n_1+a)^{s_1}(n_2+a)^{s_2}}
\end{equation}
converges absolutely. 
\end{lemma}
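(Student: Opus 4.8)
The plan is to reduce the claim to a comparison between a series and an integral. Put $\sigma_j := \Re (s_j)$ for $j=1,2$, so that the hypotheses read $\sigma_1 > 0$, $\sigma_2 > 1$ and $\sigma_1 + \sigma_2 > 2$. Since $|(n+a)^{-s}| = (n+a)^{-\Re (s)}$ whenever $n+a>0$, the absolute convergence of either side of (\ref{ser:1}) is equivalent to the finiteness of
\begin{equation*}
S := \sum_{0 \le n_1 < n_2} \frac{1}{(n_1+a)^{\sigma_1}\,(n_2+a)^{\sigma_2}} .
\end{equation*}
Because all terms are nonnegative, the value of this double sum is independent of the order of summation; in particular it equals the iterated sum on the left of (\ref{ser:1}), which is just the substitution $n_1=m$, $n_2=m+n$ (so that $n_2>n_1$ corresponds to $n\ge 1$), read as an identity in $[0,\infty]$. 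Hence it suffices to prove $S<\infty$, and the displayed identity in (\ref{ser:1}) follows once this is known.

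First I would bound the inner sum. Fix $m = n_1 \ge 0$. The function $x \mapsto (x+a)^{-\sigma_2}$ is positive and decreasing on $[0,\infty)$, so comparing the series over $n_2 \ge m+1$ with the integral over $[m,\infty)$ and using $\sigma_2 > 1$ gives
\begin{equation*}
\sum_{n_2 = m+1}^{\infty} \frac{1}{(n_2+a)^{\sigma_2}} \;\le\; \int_m^\infty \frac{dx}{(x+a)^{\sigma_2}} \;=\; \frac{(m+a)^{1-\sigma_2}}{\sigma_2 - 1} .
\end{equation*}
Multiplying by $(m+a)^{-\sigma_1}$ and summing over $m \ge 0$ then yields
\begin{equation*}
S \;\le\; \frac{1}{\sigma_2-1} \sum_{m=0}^\infty \frac{1}{(m+a)^{\sigma_1 + \sigma_2 - 1}} \;=\; \frac{\zeta(\sigma_1 + \sigma_2 - 1, a)}{\sigma_2 - 1} ,
\end{equation*}
and the right-hand side is finite because $\sigma_1 + \sigma_2 - 1 > 1$ by hypothesis. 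This proves $S < \infty$, hence the lemma.

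I do not expect a genuine obstacle here; the argument is a routine two-step estimate. The two points deserving attention are that $\sigma_2 > 1$ is precisely what forces the inner sum to converge, and that it is the $m$-dependent decay $(m+a)^{1-\sigma_2}$ — rather than a crude uniform bound on the inner sum — that allows the outer sum to converge under the hypothesis $\sigma_1 + \sigma_2 > 2$ instead of the stronger $\sigma_1 > 1$. One may remark that $\sigma_1 > 0$ is not actually used in this argument, since $\sigma_1 + \sigma_2 - 1$ exceeds $1$ regardless of the sign of $\sigma_1$; it is kept in the statement only to mark the region in which $\Phi_2(s_1,s_2,a,1,1)$ is represented by its defining double series.
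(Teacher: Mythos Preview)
Your argument is correct. Both you and the paper bound the double sum by controlling an inner sum and then summing the outer variable, but you choose the opposite order: the paper fixes $n_2$ and estimates the finite partial sum $\sum_{n_1=0}^{n_2-1}(n_1+a)^{-\sigma_1}\ll\max\{1,n_2^{1-\sigma_1+\varepsilon}\}$ (the $\varepsilon$ absorbing the logarithm at $\sigma_1=1$), whereas you fix $m=n_1$ and bound the tail $\sum_{n_2>m}(n_2+a)^{-\sigma_2}$ by the integral $(m+a)^{1-\sigma_2}/(\sigma_2-1)$. Your route is a little cleaner --- it avoids the $\varepsilon$-device and yields the explicit majorant $\zeta(\sigma_1+\sigma_2-1,a)/(\sigma_2-1)$ --- and, as you note, it never actually invokes $\sigma_1>0$. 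The paper's ordering, on the other hand, is the one reused later to show that the same series \emph{diverges} when $\sigma_1+\sigma_2<2$ (just before Proposition~\ref{pro:ezh2}), since a lower bound on the finite inner sum over $n_1$ is what is needed there.
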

\begin{proof}
For any $\varepsilon >0$, it holds that
$$
\sum_{n_2 > n_1 \ge 0} \frac{1}{(n_2+a)^{\sigma_2}(n_1+a)^{\sigma_1}} =
\sum_{n_2 =1}^\infty \frac{\sum_{n_1=0}^{n_2-1}(n_1+a)^{-\sigma_1}}{(n_2+a)^{\sigma_2}} \ll
\sum_{n_2 =1}^\infty \frac{\max \{1,n_2^{1-\sigma_1+\varepsilon}\}}{(n_2+a)^{\sigma_2}}.
$$
Hence the series above converges absolutely in the region $\Re (s_1) >0$, $\Re (s_2) >1$ and $\Re (s_1+s_2)>2$.
\end{proof}

We prove the following integral representation of $\Gamma (s_1) \Gamma (s_2) \zeta (s_1,s_2 \,; a)$ with $\Re (s_1) >0$, $\Re (s_2) >1$ and $\Re (s_1+s_2)>2$ which is a special case of \cite[(3.4)]{Ma98}.
\begin{lemma}
\label{lem:ezh1}
For $\Re (s_1) >0$, $\Re (s_2) >1$ and $\Re (s_1+s_2)>2$, we have the integral representation
\begin{equation}
\label{eq:gamezhdz}
\Gamma (s_1) \Gamma (s_2) \zeta_2 (s_1,s_2 \,; a) = 
\int_0^\infty \frac{y^{s_2-1}}{e^y-1} \int_0^\infty \frac{x^{s_1-1}e^{(1-a)(x+y)}}{e^{x+y}-1} dx dy.
\end{equation}
\end{lemma}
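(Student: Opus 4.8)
The plan is to obtain \eqref{eq:gamezhdz} as a Mellin-transform identity, starting from the Dirichlet series \eqref{eq:defezhdz} with $z_1=z_2=1$ together with the classical formula $\Gamma(s)w^{-s}=\int_0^\infty x^{s-1}e^{-wx}\,dx$, valid for $\Re(w)>0$ and $\Re(s)>0$. Applying it with $(w,s)=(m+a,s_1)$ to the first factor and $(w,s)=(m+n+a,s_2)$ to the second gives, term by term,
\[
\Gamma(s_1)\Gamma(s_2)\,(m+a)^{-s_1}(m+n+a)^{-s_2}
=\int_0^\infty\!\!\int_0^\infty x^{s_1-1}y^{s_2-1}e^{-(m+a)x}e^{-(m+n+a)y}\,dx\,dy .
\]
First I would justify summing this over $m\ge0$, $n\ge1$ under the integral sign. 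Replacing $s_1,s_2$ by $\sigma_1,\sigma_2$ turns each integral into the nonnegative quantity $\Gamma(\sigma_1)\Gamma(\sigma_2)(m+a)^{-\sigma_1}(m+n+a)^{-\sigma_2}$, whose sum over $m,n$ is finite by Lemma \ref{lem:absezh} in the region $\Re(s_1)>0$, $\Re(s_2)>1$, $\Re(s_1+s_2)>2$; hence Tonelli's theorem allows the interchange and
\[
\Gamma(s_1)\Gamma(s_2)\,\zeta_2(s_1,s_2\,;a)
=\int_0^\infty\!\!\int_0^\infty x^{s_1-1}y^{s_2-1}\Bigl(\sum_{m=0}^\infty\sum_{n=1}^\infty e^{-(m+a)x}e^{-(m+n+a)y}\Bigr)dx\,dy .
\]

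Next I would evaluate the inner double series for each fixed $x,y>0$: summing the geometric series in $n$ gives $\sum_{n\ge1}e^{-(m+n+a)y}=e^{-(m+a)y}/(e^y-1)$, and then summing in $m$ gives
\[
\frac{1}{e^y-1}\sum_{m=0}^\infty e^{-(m+a)(x+y)}=\frac{1}{e^y-1}\cdot\frac{e^{(1-a)(x+y)}}{e^{x+y}-1}.
\]
Feeding this back into the double integral and pulling the factor $y^{s_2-1}/(e^y-1)$ out of the inner $x$-integral produces exactly the right-hand side of \eqref{eq:gamezhdz}.

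The only genuine work is the convergence bookkeeping for that final double integral, which is precisely where the three hypotheses enter and which I expect to be the main point. Near infinity there is nothing to worry about: since $a>0$, one has $e^{(1-a)(x+y)}/(e^{x+y}-1)=O(e^{-a(x+y)})$ and $1/(e^y-1)=O(e^{-y})$, so the integrand decays exponentially. In the corner $x\to0$ with $y$ in a compact subset of $(0,\infty)$ the integrand is $\asymp x^{\sigma_1-1}$, integrable because $\sigma_1>0$; in the corner $y\to0$ with $x$ bounded away from $0$ it is $\asymp y^{\sigma_2-2}$, integrable because $\sigma_2>1$. The delicate region is $x,y\to0$ simultaneously, where $1/(e^y-1)\asymp y^{-1}$ and $e^{(1-a)(x+y)}/(e^{x+y}-1)\asymp (x+y)^{-1}$, so the integrand is $\asymp x^{\sigma_1-1}y^{\sigma_2-2}(x+y)^{-1}$; using $x+y\ge x^\theta y^{1-\theta}$ for $\theta\in[0,1]$ (weighted AM--GM) this is bounded by $x^{\sigma_1-1-\theta}y^{\sigma_2-3+\theta}$, which is integrable over $(0,1)^2$ as soon as $2-\sigma_2<\theta<\sigma_1$, and such a $\theta$ exists exactly because $\sigma_1+\sigma_2>2$. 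Once these estimates are assembled, the steps above are legitimate and \eqref{eq:gamezhdz} follows; it is then just the specialization of \cite[(3.4)]{Ma98} to $z_1=z_2=1$.
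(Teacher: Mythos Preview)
Your proof is correct and follows essentially the same route as the paper: both arguments rest on expanding the kernel $e^{(1-a)(x+y)}/[(e^{x+y}-1)(e^y-1)]$ as a double geometric series and matching each term to the Gamma integral $\Gamma(s)w^{-s}=\int_0^\infty x^{s-1}e^{-wx}\,dx$. The paper starts from the integral and expands to recover the Dirichlet series, while you start from the series and collapse it into the integral, but the computations are identical and the justification (absolute summability from Lemma~\ref{lem:absezh}, then Fubini/Tonelli) is the same. One small remark: once your Tonelli step is in place, the double integral is already known to converge absolutely, so the detailed corner analysis you give at the end is not strictly needed for the proof---it is a nice independent check, but the ``main point'' is really the summability handled by Lemma~\ref{lem:absezh}, not the direct integral estimate.
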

\begin{proof}
For reader's convenience, we prove Lemma \ref{lem:ezh1} although we obtain a proof of this lemma by the proof in \cite[p.~391]{Ma98}. When $y>0$ and $\sigma_1 >0$, one has
$$
\int_0^\infty \frac{x^{s_1-1}e^{(1-a)(x+y)}}{e^{x+y}-1} dx = 
\sum_{m=0}^\infty \int_0^\infty e^{-(a+m)(x+y)} x^{s_1-1} dx = 
\Gamma (s_1) \sum_{m=0}^\infty e^{-(a+m)y} (a+m)^{-s_1} .
$$
Hence, if $\Re (s_1) >0$, $\Re (s_2) >1$ and $\Re (s_1+s_2)>2$, we have
$$
\int_0^\infty \frac{y^{s_2-1}}{e^y-1} \int_0^\infty \frac{x^{s_1-1}e^{(1-a)(x+y)}}{e^{x+y}-1} dx dy =
\Gamma (s_1) \sum_{m=0}^\infty (a+m)^{-s_1} \int_0^\infty \frac{y^{s_2-1}e^{-(a+m)y}}{e^y-1}dy
$$
where the change of the integration and the summation is justified by Lebesgue's dominated convergence theorem and\begin{equation*}
\begin{split}
&\sum_{m=0}^\infty (a+m)^{-\sigma_1} \int_0^\infty \frac{y^{\sigma_2-1}e^{-(a+m)y}}{|e^y-1|}dy\\
&\ll \sum_{m=0}^\infty (a+m)^{-\sigma_1} \int_0^1 e^{-(a+m+1)y} y^{\sigma_2-2} dy
+ \sum_{m=0}^\infty (a+m)^{-\sigma_1} \int_1^\infty e^{-(a+m+1)y} y^{\sigma_2-1} dy \\
&\ll \Gamma (\sigma_2-1) \sum_{m=0}^\infty (a+m)^{-\sigma_1-\sigma_2+1} + 
\Gamma (\sigma_2) \sum_{m=0}^\infty (a+m)^{-\sigma_1-\sigma_2} .
\end{split}
\end{equation*}
Moreover, we have
$$
\int_0^\infty \frac{y^{s_2-1}e^{-(a+m)y}}{e^y-1}dy = 
\sum_{n=1}^\infty \int_0^\infty e^{-(a+m+n)y} y^{s_2-1} dy = 
\Gamma (s_2) \sum_{n=1}^\infty (a+m+n)^{-s_2} 
$$
when $\Re (s_2) >1$. Therefore we obtain this lemma.
\end{proof}

When $0< \Re (s_1) <1$, $\Re (s_2) >1$ and $1<\Re (s_1+s_2)<2$, we have the following integral representation of $\Gamma (s_1) \Gamma (s_2) \zeta_2 (s_1,s_2 \,; a)$ which is a key for the proof of (1) of Theorem \ref{th:hldz1}. It should be mentioned that the series (\ref{ser:1}) with $z_1=z_2=1$ does not converge absolutely in this case since we have
$$
\sum_{n_2 > n_1 \ge 0} \frac{1}{(n_2+a)^{\sigma_2}(n_1+a)^{\sigma_1}} =
\sum_{n_2 =1}^\infty \frac{\sum_{n_1=0}^{n_2-1}(n_1+a)^{-\sigma_1}}{(n_2+a)^{\sigma_2}} \gg
\sum_{n_2 =1}^\infty \frac{n_2^{1-\sigma_1}}{(n_2+a)^{\sigma_2}} = \infty .
$$
 \begin{proposition}
\label{pro:ezh2}
For $0< \Re (s_1) <1$, $\Re (s_2) >1$ and $1<\Re (s_1+s_2)<2$, we have the integral representation
\begin{equation}
\label{eq:gamhur}
\begin{split}
&\Gamma (s_1) \Gamma (s_2) \zeta_2 (s_1,s_2 \,; a) = \\
&\int_0^\infty \frac{y^{s_2-1}}{e^y-1} \int_0^\infty \!\!\! H(a,x+y)x^{s_1-1} dx dy +
\Gamma (s_1) \Gamma (1-s_1) \int_0^\infty \!\!\! H(1,y)y^{s_1+s_2-2} dy,
\end{split}
\end{equation}
where $H(a,x)$ is defined by (\ref{eq:defHax}). Moreover, one has the integral representation
\begin{equation}
\label{eq:gamhur*}
\begin{split}
&\Gamma (s_1) \Gamma (s_2) \zeta_2 (s_1,s_2 \,; a) = \\
&\int_0^\infty \!\!\! \int_0^\infty \frac{y^{s_2-1}}{e^y-1} H(a,x+y)x^{s_1-1} dx dy +
\int_0^\infty \!\!\! \int_0^\infty \frac{x^{s_1-1}}{x+y} H(1,y)y^{s_2-1} dx dy
\end{split}
\end{equation}
for $0< \Re (s_1) <1$, $\Re (s_2) >1$ and $1<\Re (s_1+s_2)<2$.
\end{proposition}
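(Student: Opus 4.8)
The plan is to analytically continue the representation of Lemma \ref{lem:ezh1} --- valid for $\Re(s_1)>0$, $\Re(s_2)>1$, $\Re(s_1+s_2)>2$ --- into the target region $0<\Re(s_1)<1$, $\Re(s_2)>1$, $1<\Re(s_1+s_2)<2$. The two algebraic inputs are the identity $\frac{e^{(1-a)(x+y)}}{e^{x+y}-1}=H(a,x+y)+\frac{1}{x+y}$, read off from the definition (\ref{eq:defHax}) of $H$, and the Eulerian integral
\[
\int_0^\infty\frac{x^{s_1-1}}{x+y}\,dx=y^{s_1-1}\,\Gamma(s_1)\Gamma(1-s_1)\qquad(0<\Re(s_1)<1,\ y>0),
\]
which follows from the Beta integral $\int_0^\infty t^{s-1}(1+t)^{-1}\,dt=\Gamma(s)\Gamma(1-s)$ after the substitution $x=yt$. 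The splitting of the inner $x$-integral of Lemma \ref{lem:ezh1} into its $H(a,x+y)$-part and its $\frac{1}{x+y}$-part is legitimate precisely on the strip $0<\Re(s_1)<1$: near $x=0$ the $\frac{1}{x+y}$-piece needs $\Re(s_1)>0$, while near $x=\infty$ it needs $\Re(s_1)<1$ --- and so does the $H$-piece, since $H(a,\cdot)$ is bounded on $(0,\infty)$ (by (\ref{eq:haxtay})) and behaves like $-1/u$ as $u\to\infty$.

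First I would work in the non-empty open set $R_0:=\{0<\Re(s_1)<1,\ \Re(s_1+s_2)>2\}$ (which forces $\Re(s_2)>1$), where Lemma \ref{lem:ezh1} applies. Performing the above splitting in the inner integral and then splitting the outer $y$-integral gives
\begin{equation*}
\begin{split}
\Gamma(s_1)\Gamma(s_2)\zeta_2(s_1,s_2\,;a)&=\int_0^\infty\frac{y^{s_2-1}}{e^y-1}\int_0^\infty H(a,x+y)x^{s_1-1}\,dx\,dy\\
&\qquad+\Gamma(s_1)\Gamma(1-s_1)\int_0^\infty\frac{y^{s_1+s_2-2}}{e^y-1}\,dy,
\end{split}
\end{equation*}
and the last integral equals $\Gamma(s_1+s_2-1)\zeta(s_1+s_2-1)$ because $\int_0^\infty y^{u-1}(e^y-1)^{-1}\,dy=\Gamma(u)\zeta(u)$ for $\Re(u)>1$. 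The splitting of the $y$-integral is allowed since in $R_0$ both pieces converge absolutely: the $\frac{1}{x+y}$-piece produces $\int_0^\infty y^{\Re(s_1+s_2)-2}(e^y-1)^{-1}\,dy$, finite exactly when $\Re(s_1+s_2)>2$. For the first double integral I would use that, for $0<\Re(s_1)<1$, the inner integral $\int_0^\infty H(a,x+y)x^{s_1-1}\,dx$ converges and is bounded uniformly for all $y>0$ (and tends to $\Gamma(s_1)\zeta(s_1,a)$ as $y\to0^+$ by Lemma \ref{lem:12.2ac}); hence the factor $y^{s_2-1}(e^y-1)^{-1}\sim y^{s_2-2}$ near $y=0$ is integrated against a bounded function, which converges exactly when $\Re(s_2)>1$, while near $y=\infty$ the exponential decay of $y^{s_2-1}(e^y-1)^{-1}$ handles convergence. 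The same estimates, locally uniform in $(s_1,s_2)$, together with Morera's theorem show this double integral is holomorphic on $\{0<\Re(s_1)<1,\ \Re(s_2)>1\}$.

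Next, the continuation: on the connected domain $D:=\{0<\Re(s_1)<1\}\times\{\Re(s_2)>1\}$ the left-hand side is meromorphic (by the meromorphic continuation of $\zeta_2(s_1,s_2\,;a)=\Phi_2(s_1,s_2,a,1,1)$ to $\mathbb{C}^2$), and the right-hand side just obtained is meromorphic on $D$ as well (its double integral holomorphic, its remaining term meromorphic). As the two agree on the non-empty open subset $R_0\subset D$, they agree on all of $D$, in particular on the target region $1<\Re(s_1+s_2)<2$. There $0<\Re(s_1+s_2-1)<1$, so Lemma \ref{lem:12.2ac} with $a=1$ gives $\Gamma(s_1+s_2-1)\zeta(s_1+s_2-1)=\int_0^\infty H(1,y)y^{s_1+s_2-2}\,dy$; substituting this is exactly (\ref{eq:gamhur}). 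To pass to (\ref{eq:gamhur*}) I would run the Eulerian integral backwards, writing $\Gamma(s_1)\Gamma(1-s_1)\,y^{s_1+s_2-2}=y^{s_2-1}\int_0^\infty x^{s_1-1}(x+y)^{-1}\,dx$, so the second term of (\ref{eq:gamhur}) becomes $\int_0^\infty\int_0^\infty x^{s_1-1}(x+y)^{-1}H(1,y)y^{s_2-1}\,dx\,dy$ after a Fubini interchange; this is legitimate because $\int_0^\infty\int_0^\infty x^{\Re(s_1)-1}(x+y)^{-1}|H(1,y)|y^{\Re(s_2)-1}\,dx\,dy$, after the $x$-integration, equals $\Gamma(\Re(s_1))\Gamma(1-\Re(s_1))\int_0^\infty|H(1,y)|y^{\Re(s_1+s_2)-2}\,dy$, finite since $H(1,y)\to-\tfrac12$ as $y\to0^+$ (needing $\Re(s_1+s_2)>1$) and $H(1,y)=O(1/y)$ as $y\to\infty$ (needing $\Re(s_1+s_2)<2$) --- precisely our hypotheses. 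Rewriting the first term of (\ref{eq:gamhur}) under a joint integral sign then yields (\ref{eq:gamhur*}).

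I expect the main obstacle to be the uniform control of the inner integral $\int_0^\infty H(a,x+y)x^{s_1-1}\,dx$ for small $y>0$: the whole continuation hinges on this being $O(1)$ as $y\to0^+$, so that the borderline singularity $y^{s_2-1}(e^y-1)^{-1}$ of the outer integrand is absorbed precisely under the hypothesis $\Re(s_2)>1$. Everything else --- the Eulerian integral, the recognition of $\Gamma(u)\zeta(u)$, the use of Lemma \ref{lem:12.2ac}, and the Fubini interchange --- is routine once the relevant $L^1$-estimates in $x$ and $y$ are written down.
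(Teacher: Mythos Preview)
Your proposal is correct and follows essentially the same route as the paper: start from Lemma~\ref{lem:ezh1} in the overlap strip $0<\Re(s_1)<1$, $\Re(s_1+s_2)>2$, split the inner integrand via $H(a,x+y)+\frac{1}{x+y}$, evaluate the $\frac{1}{x+y}$-piece by the Eulerian Beta integral, and then analytically continue; the passage to (\ref{eq:gamhur*}) via Fubini is handled identically. The only cosmetic differences are that the paper bounds the first double integral by a direct quadrant decomposition $\int_0^1\!\int_0^1$ versus its complement (rather than your uniform-in-$y$ bound on the inner integral), and it continues $\int_0^\infty y^{s_1+s_2-2}(e^y-1)^{-1}\,dy$ by hand as in (\ref{eq:key1}) rather than naming it $\Gamma(s_1+s_2-1)\zeta(s_1+s_2-1)$ and invoking Lemma~\ref{lem:12.2ac}.
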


\begin{proof}
It is known (see for example \cite[p.~392]{Ma98}) that
\begin{equation}\label{eq:gain}
\int_0^\infty \frac{x^{s_1-1}}{x+y} dx = y^{s_1-1} \Gamma (s_1) \Gamma (1-s_1), 
\qquad y>0, \quad 0 < \Re (s_1) <1.
\end{equation}
Hence, for $0< \Re (s_1) < 1$, $\Re (s_2) >1$ and $\Re (s_1+s_2)>2$, we have
\begin{equation*}
\begin{split}
&\Gamma (s_1) \Gamma (s_2) \zeta (s_1,s_2 \,; a) \\
&=\int_0^\infty \frac{y^{s_2-1}}{e^y-1} \int_0^\infty
\biggl( \frac{e^{(1-a)(x+y)}}{e^{x+y}-1} - \frac{1}{x+y} \biggr) x^{s_1-1} dx dy +
\int_0^\infty \frac{y^{s_2-1}}{e^y-1} \int_0^\infty \frac{x^{s_1-1}}{x+y} dx dy \\
&=\int_0^\infty \frac{y^{s_2-1}}{e^y-1} \int_0^\infty \!\!\! H(a,x+y)x^{s_1-1} dx dy +
\Gamma (s_1) \Gamma (1-s_1) \int_0^\infty \frac{y^{s_1+s_2-2}}{e^y-1} dy
\end{split}
\end{equation*}
by Lemma \ref{lem:ezh1}. It should be noted that this formula is a special case of \cite[(3.8)]{Ma98} in the region $0< \Re (s_1) < 1$, $\Re (s_2) >1$ and $\Re (s_1+s_2)>2$. Now we show the first integral in the formula above converges absolutely when $0< \Re (s_1) < 1$ and $\Re (s_2) >1$. This is proved as follows. Divide it into two integrals $\int_0^1 \int_0^1$ and $\iint_{{\mathbb{R}}_+^2 \setminus D_1}$, where ${\mathbb{R}}_+$ is the set of all positive real numbers and $D_1 := \{ x,y \in {\mathbb{R}} : 0 <x,y \le 1\}$. Then we have
$$
\int_0^1 \!\!\! \int_0^1 \frac{y^{\sigma_2-1}}{e^y-1} |H(a,x+y)|x^{\sigma_1-1} dx dy < \infty
$$
by using (\ref{eq:haxtay}). Obviously, we have $|H(a,x+y)|\ll (x+y)^{-1} < x^{-1}$ when $x\ge 1$ and $y>0$, and $|H(a,x+y)|\ll (x+y)^{-1} < y^{-1}$ when $0<x<1$ and $y\ge 1$. Hence one has
\begin{equation*}
\begin{split}
&\iint_{{\mathbb{R}}_+^2 \setminus D_1} \frac{y^{\sigma_2-1}}{e^y-1} |H(a,x+y)|x^{\sigma_1-1} dx dy \ll 
\int_1^\infty \!\!\! \int_1^\infty \frac{y^{\sigma_2-1}}{e^y-1} x^{\sigma_1-2} dx dy \\
&+ \int_1^\infty \frac{y^{\sigma_2-2}}{e^y-1} dy \int_0^1 x^{\sigma_1-1} dx dy
+\int_0^1 \frac{y^{\sigma_2-2}}{e^y-1} dy \int_1^\infty x^{\sigma_1-2} dx 
<\infty.
\end{split}
\end{equation*}
Next consider the second integral of (\ref{eq:gamhur}). From the view of (\ref{eq:key1}), one has
$$
\int_0^\infty \frac{y^{s_1+s_2-2}}{e^y-1} dy =
\int_0^1 \! H(1,y)y^{s_1+s_2-2} dy + \int_1^\infty \frac{y^{s_1+s_2-2}}{e^y-1} dy + \frac{1}{s_1+s_2-2}.
$$
The two integrals in the formula above converges absolutely when $1 < \Re (s_1+s_2) <2$ by (\ref{ieq:abcon1}) and (\ref{ieq:abcon2}). On the other hand, we have
$$
\frac{1}{s_1+s_2-2} = - \int_1^\infty \frac{y^{s_1+s_2-2}}{y} dy, \qquad 1 < \Re (s_1+s_2) <2. 
$$
Obviously, the integral $\int_1^\infty y^{s_1+s_2-3} dy$ converges absolutely when $1 < \Re (s_1+s_2) <2$. Therefore, we obtain (\ref{eq:gamhur}) by the definition of $H(1,y)$.

It was shown in the proof of (\ref{eq:gamhur}) that the first double integrals converges absolutely when $0< \Re (s_1) < 1$ and $\Re (s_2) >1$. If we can interchange of the order of the second double integrations, we have (\ref{eq:gamhur*}) from (\ref{eq:gamhur}) and (\ref{eq:gain}). This is justified as follows. By using (\ref{eq:gamhur}), we have
$$
\int_0^\infty \left| \frac{x^{s_1-1}}{x+y} \right| dx = \int_0^\infty \frac{x^{\sigma_1-1}}{x+y} dx = 
y^{\sigma_1-1} \Gamma (\sigma_1) \Gamma (1-\sigma_1) < \infty
$$
for $y>0$ and $0 < \sigma_1 <1$. Furthermore, it holds that
$$
\int_0^\infty \biggl| \frac{H(1,y)}{x+y} y^{s_1+s_2-1} \biggr| dy \le 
\int_0^\infty \!\!\! H(1,y)y^{\sigma_1+\sigma_2-2} dy < \infty
$$ 
when $x \ge 0$ and $1< \Re (s_1+s_2) < 2$ from the view of the proof of (\ref{ieq:abcon1}) and (\ref{ieq:abcon2}). Thus we can apply Fubini's theorem. 
\end{proof}

We quote the following Lemma form Akiyama and Ishikawa \cite{AI}. We have to remark that Akiyama and Ishikawa \cite{AI} consider not $\sum_{0 \le n_1 < n_2}$ but $\sum_{0 < n_1 < n_2}$ in the definition of the double zeta-function. Note that we have
\begin{equation*}
\begin{split}
\zeta_2 (s_1, s_2 \, ; a) &=
\sum_{0 < n_2} \frac{1}{a^{s_1}(n_2+a)^{s_2}} + \sum_{0 < n_1 < n_2} \frac{1}{(n_1+a)^{s_1}(n_2+a)^{s_2}} \\ 
&= a^{-s_1} \bigl( \zeta (s_2,a) - a^{-s_2} \bigr) + \sum_{0 < n_1 < n_2} \frac{1}{(n_1+a)^{s_1}(n_2+a)^{s_2}}.
\end{split}
\end{equation*}

\begin{lemma}[{see \cite[(15)]{AI}}]
Let $\lambda >0$, $l \in {\mathbb{N}}_0$, $\widetilde{B}_l(x):=B_l(x-[x])$, where $B_l(x)$ be the $l$-th Bernoulli polynomial and
$$
\Phi_l(s \,|\, \lambda,a) := \frac{(s)_{l+1}}{(l+1)!} 
\int_\lambda^\infty \frac{\widetilde{B}_{l+1}(x)}{(x+a)^{s+l+1}} dx, \qquad
(s)_l :=
\begin{cases}
s(s+1)\cdots (s+l-1) & l \ge 1,\\
1 & l=0 .
\end{cases}
$$
Then, for $\Re (s_1+s_2)>-l$, we have
\begin{equation}
\label{eq:AI1}
\begin{split}
\zeta_2 (s_1,s_2 \,; a) &= a^{-s_1} \bigl( \zeta (s_2,a) - a^{-s_2} \bigr) + 
\frac{\zeta(s_1+s_2-1,a) - a^{1-s_1-s_2}}{s_2-1} \\ &+ 
\sum_{r=0}^l \frac{B_{r+1}(0)}{(r+1)!} (s_2)_r \bigl( \zeta(s_1+s_2+r,a) - a^{-s_1-s_2-r} \bigr)-
\sum_{n=1}^\infty \frac{\Phi_l (s_2 \,|\, n,a)}{(n+a)^{s_1}} .
\end{split}
\end{equation}
The last summation is absolutely convergent, and hence holomorphic, in $\Re (s_1+s_2)>-l$. 
\end{lemma}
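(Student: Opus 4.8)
The plan is to obtain (\ref{eq:AI1}) from the order-$l$ Euler--Maclaurin expansion of the Hurwitz zeta function, applied to the inner variable. The first step is to record the $l$-th order generalisation of (\ref{eq:268}): for every integer $N \ge 0$, every $l \in {\mathbb{N}}_0$, and $\Re (s) > -l$ with $s \ne 1$,
\begin{equation}
\label{eq:EMHur}
\zeta (s,a) = \sum_{n=0}^{N} \frac{1}{(n+a)^s} + \frac{(N+a)^{1-s}}{s-1} + \sum_{r=0}^{l} \frac{B_{r+1}(0)}{(r+1)!} (s)_r (N+a)^{-s-r} - \Phi_l (s \,|\, N,a) .
\end{equation}
This is (\ref{eq:268}) when $l=0$, and the general case follows from it by repeatedly integrating by parts in the remainder integral using $\widetilde{B}_{k}'(x) = k\,\widetilde{B}_{k-1}(x)$ together with $\widetilde{B}_{k}(N) = B_k(0)$ for integers $N$. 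For $N \ge 1$ the combination $\zeta (s,a) - \sum_{n=0}^{N}(n+a)^{-s}$ is just the tail $\sum_{n>N}(n+a)^{-s}$, so (\ref{eq:EMHur}) gives a closed form for that tail.

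Next I would fix $s_1, s_2$ in the region $\Re (s_1) > 0$, $\Re (s_2) > 1$, $\Re (s_1+s_2) > 2$, where all the series in sight converge absolutely and the splitting
$$
\zeta_2 (s_1,s_2 \,; a) = a^{-s_1} \bigl( \zeta (s_2,a) - a^{-s_2} \bigr) + \sum_{n_1=1}^\infty \frac{1}{(n_1+a)^{s_1}} \sum_{n_2 > n_1} \frac{1}{(n_2+a)^{s_2}}
$$
recalled just before the statement holds term by term. Applying (\ref{eq:EMHur}) with $s = s_2$ and $N = n_1$ to the inner tail and splitting the resulting series into its finitely many pieces — each of which is absolutely convergent in this region — the term $\frac{(n_1+a)^{1-s_2}}{s_2-1}$ contributes $\frac{1}{s_2-1}\bigl(\zeta(s_1+s_2-1,a) - a^{1-s_1-s_2}\bigr)$, the $r$-th Bernoulli term contributes $\frac{B_{r+1}(0)}{(r+1)!}(s_2)_r\bigl(\zeta(s_1+s_2+r,a) - a^{-s_1-s_2-r}\bigr)$, and the remainder contributes $-\sum_{n_1\ge 1}\Phi_l(s_2\,|\,n_1,a)(n_1+a)^{-s_1}$. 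Adding the boundary term $a^{-s_1}(\zeta(s_2,a)-a^{-s_2})$ reproduces the right-hand side of (\ref{eq:AI1}) exactly. Since $\zeta_2(s_1,s_2\,;a)$ continues meromorphically to ${\mathbb{C}}^2$ and the Hurwitz zeta values on the right continue meromorphically, the identity then extends to the full range $\Re(s_1+s_2)>-l$ by analytic continuation, once we know the last series is holomorphic there.

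The one genuinely delicate step — and where I expect the real work to be — is proving that $\sum_{n\ge 1}\Phi_l(s_2\,|\,n,a)(n+a)^{-s_1}$ converges absolutely and locally uniformly, hence is holomorphic, on $\Re(s_1+s_2) > -l$. The crude estimate $|\widetilde{B}_{l+1}| \le C_l$ only yields $|\Phi_l(s_2\,|\,n,a)| \ll (n+a)^{-\Re(s_2)-l}$, which is a full power short of what is needed. To recover it I would exploit that $\widetilde{B}_{l+1}$ has vanishing mean over every unit interval: writing
$$
\int_n^\infty \widetilde{B}_{l+1}(x)(x+a)^{-s_2-l-1}\,dx = \sum_{k\ge n}\int_k^{k+1}\widetilde{B}_{l+1}(x)\bigl((x+a)^{-s_2-l-1}-(k+a)^{-s_2-l-1}\bigr)\,dx
$$
and estimating the bracket on $[k,k+1]$ by the mean value theorem gives $|\Phi_l(s_2\,|\,n,a)| \ll_{s_1,s_2,a} (n+a)^{-\Re(s_2)-l-1}$, so that $\sum_{n\ge1}|\Phi_l(s_2\,|\,n,a)|(n+a)^{-\Re(s_1)} \ll \sum_{n\ge1}(n+a)^{-\Re(s_1+s_2)-l-1}$, which converges precisely on $\Re(s_1+s_2) > -l$ with implied constants locally uniform in $(s_1,s_2)$. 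Apart from this estimate, the only remaining care is the bookkeeping of the Euler--Maclaurin sign and normalisation conventions that are absorbed into the definition of $\Phi_l$; since the claimed formula is just equation (15) of \cite{AI}, one may alternatively simply cite it.
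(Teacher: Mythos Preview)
The paper does not prove this lemma at all: it is simply quoted from \cite[(15)]{AI} with a remark adjusting the summation range from $0<n_1<n_2$ to $0\le n_1<n_2$, which accounts for the extra term $a^{-s_1}(\zeta(s_2,a)-a^{-s_2})$. Your sketch therefore supplies more than the paper itself does.

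Your argument is correct. The derivation of (\ref{eq:AI1}) via the order-$l$ Euler--Maclaurin expansion (\ref{eq:EMHur}) applied to the inner tail is the standard route and is exactly how \cite{AI} proceeds. Your handling of the convergence of the remainder series is also correct: the vanishing-mean trick you use is equivalent to one further integration by parts (writing $\widetilde{B}_{l+1}(x)\,dx = d\bigl(\widetilde{B}_{l+2}(x)/(l+2)\bigr)$), which yields
\[
\Phi_l(s_2\,|\,n,a) \;=\; -\frac{(s_2)_{l+1}}{(l+2)!}\,\frac{B_{l+2}(0)}{(n+a)^{s_2+l+1}} \;+\; \Phi_{l+1}(s_2\,|\,n,a)
\]
and hence the same bound $|\Phi_l(s_2\,|\,n,a)| \ll (n+a)^{-\Re(s_2)-l-1}$, locally uniformly in $s_2$. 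Either formulation gives absolute convergence of $\sum_{n\ge 1}\Phi_l(s_2\,|\,n,a)(n+a)^{-s_1}$ precisely on $\Re(s_1+s_2)>-l$, as required. The only cosmetic point is that you could dispense with invoking the meromorphic continuation of $\zeta_2$ from elsewhere: the right-hand side of (\ref{eq:AI1}) itself furnishes that continuation in the stated half-plane.
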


\begin{proof}[Proof of (1) of Theorem \ref{th:hldz1}]
By putting $l=0$ in (\ref{eq:AI1}), we have
\begin{equation*}
\begin{split}
\zeta_2 (s_1,s_2 \,; a) =& \,
a^{-s_1} \bigl( \zeta (s_2,a) - a^{-s_2} \bigr) + \frac{\zeta(s_1+s_2-1,a)- a^{1-s_1-s_2}}{s_2-1} \\ & -
\frac{\zeta(s_1+s_2,a) - a^{-s_1-s_2}}{2} -
\sum_{n=1}^\infty \frac{\Phi_0 (s_2 \,|\, n,a)}{(n+a)^{s_1}} .
\end{split}
\end{equation*}
Note that the last sum converges when $\Re (s_1+s_2)>0$. Hence we have
\begin{equation}\label{lim:ep}
\lim_{\varepsilon \to +0} \varepsilon \biggl( \frac{\zeta(s_1+s_2,a) - a^{-s_1-s_2}}{2} +
\sum_{n=1}^\infty \frac{\Phi_0 (s_2 \,|\, n,a)}{(n+a)^{s_1}} \biggr) =0
\end{equation}
for $\Re (s_1+s_2)>1+\delta$, where $\delta >0$. By using (\ref{eq:268}) and (\ref{lim:ep}), we have
$$
\lim_{\varepsilon \to +0} \varepsilon \zeta_2(1-2\varepsilon , 1+ \varepsilon \,; a) = 
\lim_{\varepsilon \to +0} 
\bigl( \varepsilon a^{2\varepsilon-1} \zeta(1+\varepsilon,a) + 
(\zeta(1-\varepsilon,a) - a^{\varepsilon -1}) \bigr) = -\infty
$$
since $1-2\varepsilon + 1+ \varepsilon > 1+\delta$ for some $\delta >0$. Thus one has 
$$
\lim_{\varepsilon \to +0} \zeta_2(1-2\varepsilon , 1+ \varepsilon \,;a) = -\infty.
$$

First suppose $0<a <1/2$. Then we have $\zeta (0,a) = 1/2-a>0$. Hence there exists $0<\sigma_0<1$ such that $\zeta (\sigma_0,a)>0$ when $0<a <1/2$. Then one has
$$
\lim_{\varepsilon \to +0} \varepsilon \zeta_2(\sigma_0-\varepsilon , 1+ \varepsilon \,; a) = 
a^{-\sigma_0} + (\zeta (\sigma_0,a) - a^{-\sigma_0}) = \zeta (\sigma_0,a) > 0
$$
from (\ref{eq:268}) and (\ref{lim:ep}). Hence $\lim_{\varepsilon \to +0} \zeta_2(\sigma_0-\varepsilon , 1+ \varepsilon \,; a) = + \infty $ when $0<a <1/2$. Therefore $\zeta_2 (s_1,s_2\,;a)$ has at least one real zero in $0<\sigma_1<1$, $\sigma_2>1$ and $1<\sigma_1+\sigma_2<2$ when $0 < a <1/2$ by the intermediate value theorem. 

Next suppose $a \ge 1/2$. By Lemma \ref{lem:negdefi}, we obtain $H(a,x+y)<0$ and $H(1,y)<0$ for all $x,y>0$ when $a \ge 1/2$. Hence one has
\begin{equation}
\label{ineq:ezhnega}
\zeta_2 (\sigma_1,\sigma_2\,;a) < 0, \qquad 0<\sigma_1<1, \quad \sigma_2>1, \quad 1<\sigma_1+\sigma_2<2
\end{equation}
when $a \ge 1/2$ from the integral representation (\ref{eq:gamhur*}). Therefore, the function $\zeta_2 (\sigma_1,\sigma_2\,;a)$ with $a \ge 1/2$ does not vanish when $0<\sigma_1<1$, $\sigma_2>1$ and $1<\sigma_1+\sigma_2<2$. 
\end{proof}

We show the following lemma which is a generalization of Lemma \ref{lem:ezh1}. 
\begin{lemma}
\label{lem:ezhd1}
For $\Re (s_1) >1$ and $\Re (s_2) >1$, we have the integral representation
\begin{equation}
\label{eq:gamezhdz2}
\Gamma (s_1) \Gamma (s_2) \Phi_2 (s_1,s_2, a, z_1,z_2) = 
\int_0^\infty \frac{y^{s_2-1}}{e^y-z_2} \int_0^\infty \frac{x^{s_1-1}e^{(1-a)(x+y)}}{e^{x+y}-z_1} dx dy.
\end{equation}
Furthermore, we have the following:\\
${\rm{(1)}}$. Suppose $z_1=z_2=1$. Then the integral representation (\ref{eq:gamezhdz2}) holds for $\Re (s_1) > 0$, $\Re(s_2)>1$ and $\Re (s_1+s_2) >2$. \\
${\rm{(2)}}$. Suppose $z_1=1$ and $z_2 \ne 1$. Then the integral representation (\ref{eq:gamezhdz2}) holds for $\Re (s_1) > 1$ and $\Re(s_2)>0$. \\
${\rm{(3)}}$. Suppose $z_1 \ne 1$ and $z_2 = 1$. Then the integral representation (\ref{eq:gamezhdz2}) holds for $\Re (s_1) > 0$ and $\Re(s_2)>1$. \\
${\rm{(4)}}$. Suppose $z_1 \ne 1$ and $z_2 \ne 1$. Then the integral representation (\ref{eq:gamezhdz2}) holds for $\Re (s_1) > 0$ and $\Re(s_2)>0$. 
\end{lemma}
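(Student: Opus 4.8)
The plan is to establish the identity (\ref{eq:gamezhdz2}) first in the region $\Re(s_1)>1$, $\Re(s_2)>1$ by a two-fold Mellin-type expansion, and then to deduce the four refinements by analytic continuation, the essential point being that in each case the double integral on the right-hand side of (\ref{eq:gamezhdz2}) is holomorphic on the enlarged parameter region. Throughout, write $s_j=\sigma_j+it_j$; for $x,y>0$ and $0<|z_1|,|z_2|\le 1$ one has $e^{x+y}-z_1\ne 0$ and $e^y-z_2\ne 0$, together with the geometric expansions
$$
\frac{e^{(1-a)(x+y)}}{e^{x+y}-z_1}=\sum_{m=0}^\infty z_1^m e^{-(a+m)(x+y)},\qquad
\frac{1}{e^y-z_2}=\sum_{n=1}^\infty z_2^{n-1}e^{-ny},
$$
which are valid because $|z_1e^{-(x+y)}|<1$ and $|z_2e^{-y}|<1$.

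For the basic identity I would integrate the inner $x$-integral term by term against $x^{s_1-1}$, using $\int_0^\infty e^{-\lambda x}x^{s_1-1}\,dx=\Gamma(s_1)\lambda^{-s_1}$ for $\Re(s_1)>0$ and $\lambda>0$, obtaining, for every $y>0$,
$$
\int_0^\infty \frac{x^{s_1-1}e^{(1-a)(x+y)}}{e^{x+y}-z_1}\,dx=\Gamma(s_1)\sum_{m=0}^\infty \frac{z_1^m e^{-(a+m)y}}{(a+m)^{s_1}};
$$
then multiplying this by $y^{s_2-1}/(e^y-z_2)$, expanding once more in $z_2$, and integrating in $y$ reproduces $\Gamma(s_1)\Gamma(s_2)\Phi_2(s_1,s_2,a,z_1,z_2)$ directly from the definition (\ref{eq:defezhdz}). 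Each interchange of summation and integration is justified by Tonelli's theorem: when $\Re(s_1)>1$ and $\Re(s_2)>1$ one automatically has $\Re(s_1+s_2)>2$, so, after replacing $z_1$ and $z_2$ by $1$, the majorising double series is precisely the absolutely convergent series of Lemma \ref{lem:absezh}, while the $x$- and $y$-integrals of the absolute values of the integrands are finite by the elementary estimates already used in the proof of Lemma \ref{lem:ezh1}. This proves (\ref{eq:gamezhdz2}) for $\Re(s_1)>1$, $\Re(s_2)>1$.

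For the refinements, let $R$ denote the region described in the relevant item and let $F(s_1,s_2)$ be the double integral on the right-hand side of (\ref{eq:gamezhdz2}). Its inner $x$-integral equals $\Gamma(s_1)G_{z_1}(y)$ with $G_{z_1}(y):=\sum_{m=0}^\infty z_1^m e^{-(a+m)y}(a+m)^{-s_1}$, which is absolutely convergent for $\Re(s_1)>0$ and every $y>0$; as $y\to 0^+$ it stays bounded when $z_1\ne 1$, whereas for $z_1=1$ it is of size $O(y^{\sigma_1-1})$ when $0<\sigma_1<1$, with leading term $\Gamma(1-s_1)y^{s_1-1}$ (its Mellin transform in $y$ being $\Gamma(w)\zeta(s_1+w,a)$, which has a pole at $w=1-s_1$; compare Lemma \ref{lem:12.2ac}), and stays bounded when $\sigma_1>1$. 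The remaining factor $y^{s_2-1}/(e^y-z_2)$ is of size $O(y^{\sigma_2-2})$ near $y=0$ when $z_2=1$ and of size $O(y^{\sigma_2-1})$ when $z_2\ne 1$, and it is exponentially small as $y\to\infty$ in all cases. Matching the exponents of $y$ at $y=0$ then shows that $F$ converges absolutely and locally uniformly, hence is holomorphic, on $R$, the resulting conditions being $\Re(s_1)>0,\ \Re(s_2)>1,\ \Re(s_1+s_2)>2$ in case (1) (where the statement already coincides with Lemma \ref{lem:ezh1}), $\Re(s_1)>1,\ \Re(s_2)>0$ in case (2), $\Re(s_1)>0,\ \Re(s_2)>1$ in case (3), and $\Re(s_1)>0,\ \Re(s_2)>0$ in case (4). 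Since $R$ always contains the open set $\{\Re(s_1)>1,\ \Re(s_2)>1\}$, on which $F=\Gamma(s_1)\Gamma(s_2)\Phi_2(s_1,s_2,a,z_1,z_2)$ by the previous step, and since $\Gamma(s_1)\Gamma(s_2)$ is holomorphic and non-vanishing on $R$ while $\Phi_2$ is meromorphic on $\C^2$ by \cite[Theorem 3.14]{KomoQua}, the identity theorem forces $\Gamma(s_1)\Gamma(s_2)\Phi_2(s_1,s_2,a,z_1,z_2)=F(s_1,s_2)$ throughout $R$, which is the assertion.

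I expect the main obstacle to be the endpoint analysis at $y\to 0^+$, namely isolating the singular term $\Gamma(1-s_1)y^{s_1-1}$ of the inner $x$-integral in the case $z_1=1$: this term is exactly what forces $\Re(s_1+s_2)>2$ in case (1) and $\Re(s_1)>1$ in case (2), whereas its absence when $z_1\ne 1$ is what permits $\Re(s_1)>0$ in cases (3) and (4). A secondary technical point is that, when $z_1$ or $z_2$ lies on the unit circle but differs from $1$, the Dirichlet series (\ref{eq:defezhdz}) is only conditionally convergent in part of $R$; this is why I route the argument through analytic continuation from $\{\Re(s_1)>1,\ \Re(s_2)>1\}$ rather than through a single direct application of Fubini's theorem on all of $R$.
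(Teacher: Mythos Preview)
Your proposal is correct and follows essentially the same strategy as the paper: derive (\ref{eq:gamezhdz2}) on $\Re(s_1)>1,\ \Re(s_2)>1$ by the same two-fold geometric expansion and Mellin integral, then extend to the four regions by showing absolute convergence of the right-hand side there and invoking analytic continuation. The only difference is cosmetic: the paper checks absolute convergence of the double integral directly (observing that $e^y-z_2$ and $e^{x+y}-z_1$ are bounded away from zero near the origin when $z_2\ne 1$ and $z_1\ne 1$, respectively, and referring back to (\ref{in:pfabcon})), whereas you first evaluate the inner $x$-integral as $\Gamma(s_1)G_{z_1}(y)$ and then analyze its behavior as $y\to 0^+$; both routes yield the same convergence conditions, and the paper leaves the identity-theorem step implicit where you spell it out.
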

\begin{proof}
The statement (1) has already proved in Lemma \ref{lem:ezh1}. Thus we only have to show (2), (3) and (4). First assume $\Re (s_1) >1$ and $\Re (s_2) >1$. For $y>0$, we have 
$$
\int_0^\infty \frac{x^{s_1-1}e^{(1-a)(x+y)}}{e^{x+y}-z_1} dx = 
\sum_{m=0}^\infty \int_0^\infty \frac{z_1^m x^{s_1-1}}{e^{(a+m)(x+y)}} dx = 
\Gamma (s_1) \sum_{m=0}^\infty \frac{z_1^m e^{-(a+m)y}}{(a+m)^{s_1}} .
$$
Hence it holds that
$$
\int_0^\infty \frac{y^{s_2-1}}{e^y-z_2} \int_0^\infty \frac{x^{s_1-1}e^{(1-a)(x+y)}}{e^{x+y}-z_1} dx dy =
\Gamma (s_1) \sum_{m=0}^\infty \frac{z_1^m}{(a+m)^{s_1}} \int_0^\infty \frac{y^{s_2-1}e^{-(a+m)y}}{e^y-z_2}dy
$$
where the change of the integration and the summation is justified by the method used in the proof of Lemma \ref{lem:ezh1}. Moreover, one has
$$
\int_0^\infty \frac{y^{s_2-1}e^{-(a+m)y}}{e^y-z_2}dy = 
\sum_{n=1}^\infty \int_0^\infty \frac{z_2^{n-1} y^{s_2-1}}{e^{(a+m+n)y}} dy = 
\Gamma (s_2) \sum_{n=1}^\infty \frac{z_2^{n-1}}{(a+m+n)^{s_2}} 
$$
for $\Re (s_2) >1$. Therefore we obtain (\ref{eq:gamezhdz2}) when $\Re (s_1) >1$ and $\Re (s_2) >1$.

Let $z_1=1$ and $z_2 \ne 1$. Then the integral representation (\ref{eq:gamezhdz2}) converges absolutely when $\Re (s_1) > 1$ and $\Re(s_2)>0$ since $e^y-z_2 \ne 0$ for any $y \ge 0$ (see (\ref{in:pfabcon})). Similarly, the integral (\ref{eq:gamezhdz2}) converges absolutely when $\Re (s_1) > 0$ and $\Re(s_2)>1$ since $e^{x+y}-z_1 \ne 0$ for all $x,y \ge 0$ when $z_1 \ne 1$ and $z_2 = 1$. Furthermore, the integral (\ref{eq:gamezhdz2}) converges absolutely when $\Re (s_1) > 0$ and $\Re(s_2)>0$ since $e^y-z_2 \ne 0$ and $e^{x+y}-z_1 \ne 0$ for all $x,y \ge 0$ if $z_1 \ne 1$ and $z_2 \ne 1$. Thus we have this lemma. 
\end{proof}

\begin{proof}[Proof of (2), (3) and (4) of Theorem \ref{th:hldz1}]
First, we prove (2) of Theorem \ref{th:hldz1}. Namely, suppose $z_1=1$ and $z_2 \ne 1$. From (\ref{eq:gamezhdz2}) and Fubini's theorem, we have
$$
\Gamma (s_1) \Gamma (s_2) \Phi_2 (s_1,s_2, a, 1,z_2) = 
\int_0^\infty \int_0^\infty \frac{y^{s_2-1}}{e^y-z_2} \frac{x^{s_1-1}e^{(1-a)(x+y)}}{e^{x+y}-1} dx dy
$$
for $\Re (s_1) > 1$ and $\Re(s_2)>0$ since the integral above converges absolutely. Let $z_2 \in [-1,1)$. Then we obtain $\Phi_2 (\sigma_1,\sigma_2, a, 1,z_2) >0$ when $\sigma_1 >1$ and $\sigma_2 >0$ by the fact that $e^y-z_2 >0$ and $e^{x+y}-1 \ge 0$ for any $x,y \ge 0$. 

Next assume that $z_2$ is not real. Then we have $\Im (\Phi_2 (\sigma_1,\sigma_2, a, 1,z_2) ) \ne 0$ for any $\sigma_1 >1$ and $\sigma_2 >0$ by the manner used in the proof of (\ref{ineq:im1}). Hence we have (2) of Theorem \ref{th:hldz1}. The case (3) is proved similarly. Moreover, we can easily show the case (4) when at least one of $z_1$ and $z_2$ are real numbers. 

Finally, suppose that both $z_1$ and $z_2$ are not real. Then we have
\begin{equation*}
\begin{split}
&\Gamma (s_1) \Gamma (s_2) \Phi_2 (s_1,s_2, a, z_1,z_2) = \\ &\int_0^\infty \int_0^\infty 
\frac{x^{s_1-1}y^{s_2-1}e^{(1-a)(x+y)} (e^{x+2y} - \overline{z_1} e^y - \overline{z_2} e^{x+y} + \overline{z_1}\overline{z_2})}{|e^y-z_2|^2 |e^{x+y}-z_1|^2} dx dy.
\end{split}
\end{equation*}
Now put $\overline{z_1} := r_1 e^{2\pi i \theta_1}$ and $\overline{z_2} := r_2 e^{2\pi i \theta_2}$, where $0 < r_1, r_2, \theta_1, \theta_2 \le 1$. Then one has
\begin{equation*}
\begin{split}
&\Re (e^{x+2y} - \overline{z_1} e^y - \overline{z_2} e^{x+y} + \overline{z_1}\overline{z_2}) \\ & =
e^{x+2y}- e^y r_1 \cos (2\pi \theta_1) -e^{x+y} r_2 \cos (2\pi \theta_2) \\ & \qquad + 
r_1r_2 \bigl(\cos (2\pi \theta_1) \cos (2\pi \theta_2) - \sin (2\pi \theta_1) \sin (2\pi \theta_2)\bigr) ,
\end{split}
\end{equation*}
\begin{equation*}
\begin{split}
&\Im (e^{x+2y} - \overline{z_1} e^y - \overline{z_2} e^{x+y} + \overline{z_1}\overline{z_2}) \\ &=
-e^y r_1 \sin (2\pi \theta_1) -e^{x+y} r_2 \sin (2\pi \theta_2) + 
r_1r_2 \bigl(\sin (2\pi \theta_1) \cos (2\pi \theta_2) + \sin (2\pi \theta_2) \cos (2\pi \theta_1)\bigr) \\ &=
-r_1 \sin (2\pi \theta_1) \bigl(e^y - r_2\cos (2\pi \theta_2)\bigr) - r_2 \sin (2\pi \theta_2) \bigl(e^{x+y} - r_1\cos (2\pi \theta_1)\bigr). 
\end{split}
\end{equation*}
When $\sin (2\pi \theta_1) \sin (2\pi \theta_2) >0$, we can see that the sign of $\Im (e^{x+2y} - \overline{z_1} e^y - \overline{z_2} e^{x+y} + \overline{z_1}\overline{z_2})$ does not change even if $x$ and $y$ run through from $0$ to $\infty$. Therefore it holds that $\Im (\Phi_2 (\sigma_1,\sigma_2, a, z_1, z_2) ) \ne 0$ for any $\sigma_1 >0$ and $\sigma_2 >0$ if $\sin (2\pi \theta_1) \sin (2\pi \theta_2) >0$. Hence, we only have to show the case $\sin (2\pi \theta_1) \sin (2\pi \theta_2) <0$. In this case, we have 
\begin{equation*}
\begin{split}
&\Re (e^{x+2y} - \overline{z_1} e^y - \overline{z_2} e^{x+y} + \overline{z_1}\overline{z_2}) \\ &>  
e^{x+2y}- e^y r_1 \cos (2\pi \theta_1) -e^{x+y} r_2 \cos (2\pi \theta_2) + r_1r_2  \cos (2\pi \theta_1) \cos (2\pi \theta_2) \\ &=
\bigl(e^y - r_2 \cos (2\pi \theta_2) \bigr) \bigl(e^{x+y} - r_1 \cos (2\pi \theta_1) \bigr) >0. 
\end{split}
\end{equation*}
Thus we have $\Re (\Phi_2 (\sigma_1,\sigma_2, a, z_1, z_2) ) > 0$ for any $\sigma_1 >0$ and $\sigma_2 >0$ if $\sin (2\pi \theta_1) \sin (2\pi \theta_2) <0$. Therefore, we obtain (2), (3) and (4) of Theorem \ref{th:hldz1}. 
\end{proof}

\begin{proof}[Proof of Proposition \ref{pro:rezeroezh}]
When $\Re (s_1),\Re (s_2)>1$, we have
\begin{equation*}
\begin{split}
&\zeta (s_1,a) \zeta (s_2,a) =
\Biggl(\sum_{m>n\ge 0} + \sum_{n>m\ge 0} + \sum_{m=n\ge 0} \Biggr) \frac{1}{(m+a)^{s_1} (n+a)^{s_2}} \\ &=
\zeta_2 (s_1,s_2 \,; a) + \zeta_2 (s_2,s_1 \,; a) + \zeta (s_1+s_2,a)
\end{split}
\end{equation*}
form (\ref{ser:1}) and the view of the harmonic product. Hence one has
$$
2 \zeta_2 (s,s \,; a) = \zeta (s,a)^2 - \zeta (2s,a) 
$$
when $\Re (s) >1$. Note that the equation above gives an analytic continuation of $\zeta_2 (s,s \,; a)$ for $1/2<\Re (s) <1$. Then we have
$$
\lim_{\sigma \to 1-0} \zeta_2 (\sigma,\sigma \,; a) = \infty, \qquad 
\lim_{\sigma \to 1/2+0} \zeta_2 (\sigma,\sigma \,; a) = -\infty .
$$
by (\ref{eq:268}). Hence we have Proposition \ref{pro:rezeroezh} by the intermediate value theorem. 
\end{proof}

\subsection*{Acknowledgments}
The author would like to thank the referee for useful comments and suggestions that helped him to improve the original manuscript. 

 
\end{document}